\newtheorem{thm}{Theorem}[section]
\newcommand{\bt}{\begin{thm}}
\newcommand{\et}{\end{thm}}
\newtheorem{cor}[thm]{Corollary}   
\newcommand{\bc}{\begin{cor}}
\newcommand{\ec}{\end{cor}}
\newtheorem{lem}[thm]{Lemma}   
\newcommand{\bl}{\begin{lem}}
\newcommand{\el}{\end{lem}}
\newtheorem{prop}[thm]{Proposition}
\newcommand{\bp}{\begin{prop}}
\newcommand{\ep}{\end{prop}}
\theoremstyle{definition}
\newtheorem{defn}[thm]{Definition}
\newcommand{\bd}{\begin{defn}}    
\newcommand{\ed}{\end{defn}}
\newtheorem{rmrk}[thm]{Remark}   
\newtheorem*{notation}{Notation}
\newcommand{\br}{\begin{rmrk}}
\newcommand{\er}{\end{rmrk}}
\newcommand{\GHto}{\xrightarrow{\>\mathrm{GH}\> }}
\newcommand{\IFto}{\xrightarrow{\;\;\mathrm{F}\;\; }}
\newcommand{\VFto}{\xrightarrow{\>\mathrm{VF}\> }}
\newcommand{\GHZto}{\xrightarrow[Z]{\>\mathrm{GH}\> }}
\newcommand{\GHWto}{\xrightarrow[W]{\>\mathrm{GH}\> }}
\newcommand{\IFZto}{\xrightarrow[Z]{\;\; \mathrm{F}\;\; }}
\newcommand{\IFWto}{\xrightarrow[W]{\;\;\mathrm{F}\;\; }}
\newcommand{\be}{\begin{equation}}
\newcommand{\ee}{\end{equation}}
\newcommand{\R}{\mathbb{R}}
\newcommand{\M}{\mathbf{M}}
\newcommand{\E}{\mathbb{E}}
\newcommand{\diam}{\operatorname{diam}}
\DeclareMathOperator{\set}{set}
\newcommand{\Lip}{\operatorname{Lip}}
\newcommand{\vol}{\operatorname{Vol}}
\newcommand{\rstr}{\llcorner}
\def\rr{\mathbb{R}}
\def\too{\longrightarrow}
\def\Gr{\mathcal{G}_n(r_0,\gamma,D, \alpha)}
\begin{document}

\title[Convergence of points]{Intrinsic flat convergence of points and applications to stability of the positive mass theorem
}

\author[Huang]{Lan-Hsuan Huang}
\thanks{Huang is partially supported by NSF grants DMS-1452477 and DMS-2005588.}
\address{University of Connecticut}
\email{lan-hsuan.huang@uconn.edu}

\author[Lee]{Dan A. Lee}
\thanks{Lee is partially supported by a PSC CUNY Research Grant.}
\address{CUNY Graduate Center and Queens College}
\email{dan.lee@qc.cuny.edu}

\author[Perales]{Raquel Perales}
\thanks{}
\address{CONACyT Research Fellow at the National Autonomous University of Mexico}
\email{raquel.perales@im.unam.mx}

\date{}

\keywords{}

\begin{abstract}
We prove results on intrinsic flat convergence of points---a concept first explored by Sormani in~\cite{Sormani-AA}. In particular, we discuss compatibility with Gromov-Hausdorff convergence of points---a concept first described by Gromov in~\cite{Gromov-poly}. 

We  apply these results to the problem of stability of the positive mass theorem in mathematical relativity. Specifically, we revisit the article~\cite{HLS} on intrinsic flat stability for the case of graphical hypersurfaces of Euclidean space: We are able to fill in some details in the proofs of Theorems 1.4 and Lemma~5.1 of~\cite{HLS} and strengthen some statements. Moreover, in light of an acknowledged error in the proof of Theorem~1.3 of~\cite{HLS},  we provide an alternative proof that extends recent work of~\cite{AP20}. 
\end{abstract}

\maketitle


\section{Introduction}\label{introduction}

Questions concerning  convergence of Riemannian manidolds with lower bounds on  scalar curvature  have attracted increasing attention in the past decade. From mathematical relativity, the question of \emph{stability of the positive mass theorem} asks:  If  a sequence of complete asymptotically flat $n$-dimensional manifolds $(M_j, g_j)$ with nonnegative scalar curvature has ADM masses converging to zero, in what sense must the sequence $(M_j, g_j)$ converge to  Euclidean space? In~\cite{LeeSormani1}, Sormani and the second author observed that convergence  fails in the Gromov-Hausdorff topology in general, but they conjectured that  convergence holds (outside the apparent horizon) in the intrinsic flat topology of Sormani and Wenger~\cite{SorWen2} and established the conjecture in the spherically symmetric case.

There has been much recent progress on applying Sormani-Wenger's intrinsic flat convergence to scalar curvature convergence problems in certain special cases. In particular, the case of Riemannian manifolds that can be embedded as graphical hypersurfaces in Euclidean space has been studied in \cite{HLS, CC, CPKP}. The advantage in the graphical setting is due to an observation of G.~Lam~\cite{Lam-graph} that the scalar curvature of a graphical hypersurface, which can be expressed as a divergence quantity, induces a ``quasi-local mass'' quantity on  level sets of the graphical hypersurface. Further investigation of Lam's quasi-local mass leads to several intriguing properties of the hypersurfaces. For example,  an alternative proof of rigidity of the positive mass theorem in this setting was given in~\cite{Huang-Wu:2013}. From there, the first two authors obtained the stability of the positive mass theorem in Federer-Fleming's flat topology~\cite{Huang-Lee-graph}. In \cite{HLS}, Sormani and the first two authors developed new tools to understand how the results of~\cite{Huang-Lee-graph} relate to the intrinsic flat topology, and some of these tools have been applied by other works on intrinsic flat topology~\cite{Allen-Sormani:2019, Allen-Sormani:2020, CPKP, Allen-Bryden:2019, Allen-Burtscher:2020}. 

Some of the arguments in~\cite{HLS}, especially regarding intrinsic flat convergence of points, were less mature at the time that  \cite{HLS} was written, but the ideas have influenced the study of pointed intrinsic flat convergence. In this note, we clarify those arguments and establish new results on intrinsic flat convergence of points and its compatibility with Gromov-Hausdorff convergence of points. These general results, proven in Section~\ref{section:point-convergence}, are not specific to the graphical hypersurface setting, and we expect them to find further applications. In Sections~\ref{sec:thm1.4} and~\ref{sec:lemma5.1}, we use these results to  flesh out some missing details in the proofs of Theorems 1.4 and Lemma 5.1 of~\cite{HLS}. Separately, we also address an acknowledged error in the proof of Theorem~1.3 of~\cite{HLS} by providing an alternative proof using recent work of B.~Allen and the third author~\cite{AP20}. In doing so, we verify that all of the results of~\cite{HLS} are true.

We thank Christina Sormani for many valuable discussions, Armando Cabrera Pacheco for contributing some ideas of Section~\ref{section:point-convergence},  Christian Ketterer, and Brian Allen.


\section{Point convergence in Gromov-Hausdorff or intrinsic flat sense} \label{section:point-convergence}

We will introduce new vocabulary and notation that will replace some of the less precise language regarding point convergence that was used in~\cite{HLS, Sormani-AA}.  Otherwise, we will use the same notation and definitions as in~\cite{HLS}, with one main exception.

\begin{notation}
Throughout this paper, we use the notation $B(p, r)$ to denote the \textbf{closed} ball of radius $r$ around $p$, and if there is no point specified, then $B(r)$ is just the closed ball of radius $r$ around the origin in Euclidean space. One reason why we choose this convention is that if we regard a \emph{closed} ball $B(p, r)$ in a complete Riemannian manifold as an integral current space $S(p,r)$, then the canonical set of $S(p,r)$, denoted $\set(S(p,r))$, can be identified with $B(p, r)$, whereas this does not work for open balls. The compactness of closed balls in complete Riemannian manifolds is also convenient. 
\end{notation}

Recall that if metric spaces $(X_j, d_j)$ converge to a metric space $(X_\infty,d_\infty)$ in the Gromov-Hausdorff sense, we write $(X_j, d_j)\GHto (X_\infty,d_\infty)$, or perhaps $X_j\GHto X_\infty$ when there is no chance for confusion. Similarly, if integral current spaces $M_j=(X_j, d_j, T_j)$ converge to an integral current space $M_\infty= (X_\infty, d_\infty, T_\infty)$ in the intrinsic flat sense, we write  $M_j \IFto M_\infty$. It is often convenient to see these convergences as occurring within a fixed metric space, so we introduce the following notation:

\begin{defn}
Consider metric spaces $(X_j, d_j)$ and a choice of a separable complete metric space $(Z, d)$ and metric-isometric embedding maps $\varphi_j:X_j \rightarrow Z$, for  $j\in\mathbb{N}\cup\{\infty\}$.

We say that 
\[ X_j \GHZto X_\infty\]
if and only if $\varphi_j(X_j)\to \varphi_\infty(X_\infty)$ in the Hausdorff sense in $Z$.

If $M_j=(X_j, d_j, T_j)$ are $n$-dimensional integral current spaces for  $j\in\mathbb{N}\cup\{\infty\}$, we say that 
\[ M_j \IFZto M_\infty\]
if and only if ${\varphi_j}_{\#}(T_j)\to {\varphi_\infty}_{\#}(T_\infty)$ in the flat sense in $Z$. 

The $Z$ in this notation is intended to indicate that the maps $\varphi_j$ have also been chosen despite not being written down explicitly. 
\end{defn}
In this language, \cite[Theorem 2.3]{Sormani-AA}, sometimes called Gromov's embedding theorem, becomes:
\begin{thm}[Gromov\footnote{The explanation for why this theorem follows from results of~\cite{Gromov-poly} can be found within the proof of~\cite[Theorem 4.2]{SorWen2}.}]
If $(X_j, d_j)$ are compact metric spaces for $j\in\mathbb{N}\cup \{\infty\}$, then $X_j\GHto X_\infty$ if and only if there exists a compact metric space $Z$ (and embedding maps $\varphi_j$) such that $X_j \GHZto X_\infty$.
\end{thm}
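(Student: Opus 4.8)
The plan is to prove the two implications separately; the reverse implication is essentially a formality, while the forward implication carries all the content. For ``$X_j \GHZto X_\infty$ for some compact $Z$ $\implies X_j \GHto X_\infty$'': for any metric space $(Z,d)$ and compact subsets $A, B \subseteq Z$, the restriction of $d$ to a coupling of $A$ and $B$ is admissible in the definition of the Gromov--Hausdorff distance, so $d_{GH}(A,B) \le d_H^Z(A,B)$, where $d_H^Z$ denotes the Hausdorff distance computed inside $Z$. Since each $\varphi_j$ is distance preserving, $d_{GH}(X_j, X_\infty) = d_{GH}(\varphi_j(X_j), \varphi_\infty(X_\infty)) \le d_H^Z(\varphi_j(X_j), \varphi_\infty(X_\infty)) \to 0$, which is exactly $X_j \GHto X_\infty$.

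For the converse, the idea is to glue each $X_j$ onto $X_\infty$ along an almost-optimal coupling and then fuse all the pieces through $X_\infty$. Since $d_{GH}(X_j,X_\infty)\to 0$, for each $j\in\N$ we may fix a metric $d_j'$ on the abstract disjoint union $X_j \sqcup X_\infty$ which restricts to $d_j$ and $d_\infty$ on the two factors and for which the Hausdorff distance between $X_j$ and $X_\infty$ inside $(X_j \sqcup X_\infty, d_j')$ is some number $\delta_j < d_{GH}(X_j, X_\infty) + 1/j$, so $\delta_j \to 0$. Set $Z = X_\infty \sqcup \bigsqcup_{j\in\N} X_j$, and define a function $d$ on $Z \times Z$ by: $d = d_j$ on $X_j \times X_j$; $d = d_\infty$ on $X_\infty \times X_\infty$; $d = d_j'$ between $X_j$ and $X_\infty$; and, for $i \ne j$,
\[
d(x_i, x_j) := \inf_{z\in X_\infty}\bigl(d_i'(x_i,z) + d_j'(z, x_j)\bigr).
\]
Let $\varphi_j\colon X_j \hookrightarrow Z$ and $\varphi_\infty\colon X_\infty \hookrightarrow Z$ be the inclusions, which are distance preserving by construction.

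I would then carry out three verifications. \emph{(1) $d$ is a genuine metric on $Z$.} Symmetry is clear, and the triangle inequality follows from a short case analysis according to how many of the (at most three) factors containing the points coincide, each case reducing to the triangle inequality for one of the $d_i'$ together with the fact that rerouting through $X_\infty$ never increases length. The only delicate point is non-degeneracy: if $x_i\in X_i$ and $x_j\in X_j$ with $i\neq j$ satisfied $d(x_i, x_j)=0$, there would be $z_k\in X_\infty$ with $d_i'(x_i,z_k)\to 0$ and $d_j'(z_k, x_j)\to 0$, and compactness of $X_\infty$ would give a subsequential limit $z_k \to z \in X_\infty$ with $d_i'(x_i, z) = 0$, contradicting that $d_i'$ is a metric. \emph{(2) $Z$ is compact.} Since $Z$ is a metric space it suffices to prove sequential compactness: a sequence in $Z$ either has infinitely many terms in a single factor, in which case the compactness of that factor (which is readily seen to be closed in $Z$) supplies a convergent subsequence; or else, after passing to a subsequence, it lies in factors $X_{j_n}$ with $j_n\to\infty$, and then $d(x_n, X_\infty) \le \delta_{j_n} \to 0$, so picking $y_n\in X_\infty$ that nearly realizes this distance and invoking compactness of $X_\infty$ yields a convergent subsequence. \emph{(3) Hausdorff convergence.} By construction the $Z$-distance between a point of $X_j$ and a point of $X_\infty$ equals $d_j'$ (a detour through some other factor $X_k$ is never shorter, again by the triangle inequalities among the $d_i'$), so $d_H^Z(\varphi_j(X_j), \varphi_\infty(X_\infty)) = \delta_j \to 0$; that is, $X_j \GHZto X_\infty$.

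I expect step (2) to be the main obstacle: the whole force of the statement is that the ambient space can be taken \emph{compact}, not merely separable or $\sigma$-compact, and this rests on combining the compactness of the limit $X_\infty$ with the uniform decay $\delta_j \to 0$ to tame the ``tail'' factors $X_j$ for large $j$. The case-checking in step (1) is routine but must be done carefully to ensure that one obtains a metric and not merely a pseudometric.
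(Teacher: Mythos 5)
Your proof is correct, and the gluing construction you use---metrizing the disjoint union $X_\infty \sqcup \bigsqcup_j X_j$ by almost-optimal couplings $d_j'$ of each $X_j$ with $X_\infty$ and then routing cross-distances through $X_\infty$---is essentially the standard argument that the paper attributes to Gromov and, via its footnote, to the proof of~\cite[Theorem 4.2]{SorWen2}; the paper itself simply cites the result without reproducing a proof. You correctly identify that the substance is in verifying compactness of $Z$, which hinges on the uniform decay $\delta_j\to 0$ together with compactness of the single limit factor $X_\infty$, and your case analysis for the triangle inequality and non-degeneracy (using compactness of $X_\infty$ to pass to a limit of near-minimizing $z_k$) is sound.
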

We also have the analogous statement for intrinsic flat convergence:
\begin{thm}[{\cite[Theorem 4.2]{SorWen2}}]
If $M_j=(X_j, d_j, T_j)$ are $n$-dimensional integral current spaces for $j\in\mathbb{N}\cup \{\infty\}$, then $M_j\IFto M_\infty$ if and only if there exists a separable complete metric space $Z$ (and embedding maps $\varphi_j$) such that $M_j \IFZto M_\infty$.
\end{thm}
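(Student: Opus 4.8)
The plan is to prove the two implications separately. The ``if'' direction is essentially immediate from the definition of the intrinsic flat distance $d_{\mathcal F}$: if $Z$ is a fixed complete separable metric space equipped with isometric embeddings $\varphi_j\colon X_j\hookrightarrow Z$ for $j\in\mathbb N\cup\{\infty\}$ such that $\varphi_{j\#}T_j\to\varphi_{\infty\#}T_\infty$ in the flat sense in $Z$, then this data is an admissible competitor in the infimum defining $d_{\mathcal F}$, so $d_{\mathcal F}(M_j,M_\infty)\le \mathcal F_Z(\varphi_{j\#}T_j,\varphi_{\infty\#}T_\infty)\to 0$, i.e.\ $M_j\IFto M_\infty$.

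For the ``only if'' direction, assume $M_j\IFto M_\infty$, i.e.\ $d_{\mathcal F}(M_j,M_\infty)\to 0$; the task is to produce a single ambient space that works for every $j$. First I would use the definition of $d_{\mathcal F}$ to choose, for each $j\in\mathbb N$, a separable metric space $Z_j$ (which we may take complete by passing to its completion) together with isometric embeddings $\alpha_j\colon X_j\hookrightarrow Z_j$ and $\beta_j\colon X_\infty\hookrightarrow Z_j$ with $\mathcal F_{Z_j}(\alpha_{j\#}T_j,\beta_{j\#}T_\infty)<\varepsilon_j$, where $\varepsilon_j:=d_{\mathcal F}(M_j,M_\infty)+2^{-j}\to 0$. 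Next I would glue all the $Z_j$ together along their common copy of $X_\infty$: form the quotient of $\bigsqcup_{j\in\mathbb N}Z_j$ by the identifications $\beta_j(x)\sim\beta_k(x)$ for all $j,k\in\mathbb N$ and $x\in X_\infty$, and equip it with the ``chain'' pseudometric $\hat d$ in which the distance between two points is the infimum, over all finite chains joining them, of the total length, where each chain alternates between arcs internal to some $Z_j$ and jumps across the glued copy of $X_\infty$. One then checks that $\hat d$ is a pseudometric and, crucially, that its restriction to each $Z_j$ coincides with $d_{Z_j}$, so that no shortcuts are created: any excursion of a chain into a factor $Z_k$ with $k\ne j$ must enter and leave through $\beta_k(X_\infty)$, hence contributes length at least the corresponding distance within $\beta_j(X_\infty)\subseteq Z_j$. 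Letting $Z$ be the metric completion of the metric space obtained from $\hat d$ by identifying points at distance zero, $Z$ is complete, it is separable (a countable union of separable pieces), each $Z_j$ embeds isometrically into it by a map $\iota_j$, and all the images $\iota_j(\beta_j(X_\infty))$ coincide. (If $X_\infty=\emptyset$, i.e.\ $M_\infty$ is the zero integral current space, glue the $Z_j$ at a single auxiliary point instead; nothing else changes.)

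Finally I would set $\varphi_j:=\iota_j\circ\alpha_j\colon X_j\hookrightarrow Z$ and $\varphi_\infty:=\iota_1\circ\beta_1\colon X_\infty\hookrightarrow Z$ (any index yields the same map). Since pushforward under an isometric embedding preserves mass, a flat filling of $\alpha_{j\#}T_j-\beta_{j\#}T_\infty$ by currents of small mass in $Z_j$ pushes forward under $\iota_j$ to a flat filling of $\varphi_{j\#}T_j-\varphi_{\infty\#}T_\infty$ of the same mass in $Z$, whence
\[
\mathcal F_Z\bigl(\varphi_{j\#}T_j,\varphi_{\infty\#}T_\infty\bigr)\le \mathcal F_{Z_j}\bigl(\alpha_{j\#}T_j,\beta_{j\#}T_\infty\bigr)<\varepsilon_j\to 0,
\]
which is precisely $M_j\IFZto M_\infty$. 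The main obstacle is the gluing step of the previous paragraph: verifying that $\hat d$ is genuinely a pseudometric and, above all, that it does not shorten distances inside any individual $Z_j$ (so that the inclusions $\iota_j$ are honest isometric embeddings into $Z$), together with a careful treatment of the metric identification and completion when $X_\infty$ fails to be complete---the key point there being that distinct points of a single $Z_j$ are never identified, precisely because $\hat d$ restricts to $d_{Z_j}$ on $Z_j$.
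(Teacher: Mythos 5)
The paper itself does not prove this statement; it is quoted verbatim as Theorem~4.2 of~\cite{SorWen2}, and the authors use it as a black box. Your reconstruction is essentially the argument given in that source: the ``if'' direction is the trivial observation that a converging sequence in a fixed $Z$ is an admissible competitor in the infimum defining $d_{\mathcal F}$, and the ``only if'' direction glues the nearly-optimal ambient spaces $Z_j$ along the common isometric copy of (the completion of) $X_\infty$, verifies via the chain-pseudometric argument that no shortcuts are created inside any single $Z_j$, and pushes the flat fillings forward. Two small points of hygiene worth keeping in mind if you write this up in full: the embeddings coming from the definition of $d_{\mathcal F}$ are into $Z_j$ from the \emph{completions} $\overline X_j$ and $\overline X_\infty$, so the gluing should be performed along $\overline X_\infty$ rather than $X_\infty$ (your remark about identifying points at $\hat d$-distance zero and then completing handles this, but it is cleaner to glue along the closed set from the start); and one should note explicitly that the chain pseudometric satisfies the triangle inequality by concatenation and that isometric pushforward preserves both mass and the flat norm, so the displayed inequality for $\mathcal F_Z$ is legitimate. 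With those touches your argument is a correct, self-contained proof of the cited theorem.
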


We introduce notation to deal with the concept of convergence of points in the Gromov-Hausdorff or intrinsic flat sense:

\begin{defn}
Consider metric spaces $(X_j, d_j)$ and a choice of a separable complete metric space $(Z, d)$ and metric-isometric embedding maps $\varphi_j:X_j \rightarrow Z$, for  $j\in\mathbb{N}\cup\{\infty\}$.

For points $x_j\in X_j$ for $j\in\mathbb{N}\cup\{\infty\}$, we say that
\[ (X_j, x_j)\GHZto (X_\infty, x_\infty) \]
if and only if $X_j\GHZto X_\infty$ and also $\varphi_j(x_j)\to \varphi_\infty(x_\infty)$ as points in $Z$.

If $M_j=(X_j, d_j, T_j)$ are $n$-dimensional integral current spaces for  $j\in\mathbb{N}\cup\{\infty\}$, then for points $x_j \in X_j$ for $j\in \mathbb{N}$, and $x_\infty\in \overline{X}_\infty$, we say that
\[ (M_j, x_j)\IFZto (M_\infty, x_\infty) \]
if and only if $M_j\IFZto M_\infty$ and also $\varphi_j(x_j)\to \varphi_\infty(x_\infty)$ as points in $Z$. 
\end{defn}

Note that for the second part of the definition, $x_\infty$ need not lie in $X_\infty$, but the definition makes sense since $\varphi_\infty$ extends to the completion $\overline{X}_\infty$. The concept of point convergence in the intrinsic flat sense was first formulated in~\cite[Definition 3.1]{Sormani-AA}, which referred to ``$x_j\in X_j$ converging to $x_\infty\in \overline{X}_\infty$.'' In our language, this means that there exist $Z$ and $\varphi_j$ such that $(M_j, x_j)\IFZto (M_\infty, x_\infty)$. (See also~\cite[Definitions 2.4 and 2.11]{HLS}.)

As alluded to in~\cite[Remark 2.12]{HLS}, if we have both Gromov-Hausdorff and intrinsic flat convergence, we can use the same embedding space $Z$ for both convergences. Here we state this fact explicitly:

\begin{prop}\label{prop:GHIF}
Let $M_j=(X_j, d_j, T_j)$ be compact $n$-dimensional integral current spaces for $j\in\mathbb{N}\cup\{\infty\}$. Then we have both $X_j\GHto X_\infty$ and $M_j\IFto M_\infty$ if and only if there exists a separable complete metric space $Z$ (and embedding maps $\varphi_j$) such that we have both $X_j\GHZto X_\infty$ and $M_{j}\IFZto M_\infty$. 
\end{prop}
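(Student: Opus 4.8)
The plan is to prove the nontrivial implication; the other one is immediate, since $X_j\GHZto X_\infty$ forces $\varphi_j(X_j)\to\varphi_\infty(X_\infty)$ in the Hausdorff sense in $Z$ and hence $X_j\GHto X_\infty$, and likewise $M_j\IFZto M_\infty$ gives $M_j\IFto M_\infty$.

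So suppose $X_j\GHto X_\infty$ and $M_j\IFto M_\infty$. First I would use \cite[Theorem 4.2]{SorWen2} to fix a separable complete metric space $W$ and metric-isometric embeddings $\varphi_j\colon X_j\to W$, $j\in\mathbb N\cup\{\infty\}$, with ${\varphi_j}_{\#}T_j\to{\varphi_\infty}_{\#}T_\infty$ in the flat sense in $W$. The whole proposition then reduces to showing that this same family of embeddings \emph{also} realizes the Gromov--Hausdorff convergence, i.e.\ that
\[
\varphi_j(X_j)\longrightarrow\varphi_\infty(X_\infty)\quad\text{in the Hausdorff distance on }W,
\]
for then $Z:=W$ does the job. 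I would emphasize that this reduction is precisely the point at which the hypothesis is used that the Gromov--Hausdorff limit coincides with the underlying space $X_\infty=\set(M_\infty)$ of the flat limit: were the two limits permitted to differ, the statement would be false (the well-known ``thin tentacle'' examples), so any argument must exploit this coincidence.

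To establish the displayed Hausdorff convergence I would treat the two one-sided inclusions separately. The inclusion $\varphi_\infty(X_\infty)\subseteq N_{\epsilon_j}\!\big(\varphi_j(X_j)\big)$ with $\epsilon_j\to0$ follows from the flat convergence alone: testing against Lipschitz tuples shows ${\varphi_j}_{\#}T_j$ converges to ${\varphi_\infty}_{\#}T_\infty$ weakly as currents, whence $\|{\varphi_\infty}_{\#}T_\infty\|(U)\le\liminf_j\|{\varphi_j}_{\#}T_j\|(U)$ for every open $U\subseteq W$. Thus if some $z\in\varphi_\infty(X_\infty)=\spt\|{\varphi_\infty}_{\#}T_\infty\|$ remained at a fixed positive distance from $\varphi_j(X_j)=\spt\|{\varphi_j}_{\#}T_j\|$ along a subsequence, a small open ball about $z$ would miss $\spt\|{\varphi_j}_{\#}T_j\|$ for those $j$, forcing $\|{\varphi_\infty}_{\#}T_\infty\|$ to vanish near $z$; compactness of $X_\infty$ promotes this to the uniform estimate. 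The reverse inclusion $\varphi_j(X_j)\subseteq N_{\epsilon_j}\!\big(\varphi_\infty(X_\infty)\big)$ is the substantive one, and it is here that I would use the Gromov--Hausdorff hypothesis. Granting (see the next paragraph) that $\{\varphi_j(X_j)\}_j$ is precompact for the Hausdorff distance on $W$: if this inclusion failed, a subsequence of $\varphi_j(X_j)$ would Hausdorff-converge to some compact $Y\subseteq W$ with $\varphi_\infty(X_\infty)\subsetneq Y$ (it contains $\varphi_\infty(X_\infty)$ by the first inclusion). But Hausdorff convergence inside the fixed space $W$ forces $Y$ to be a Gromov--Hausdorff limit of the $X_j$, so $Y$ is isometric to $X_\infty$, hence to $\varphi_\infty(X_\infty)$; since a compact metric space admits no distance-preserving map onto a proper subset of itself, $Y=\varphi_\infty(X_\infty)$ --- a contradiction. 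This completes the reduction modulo precompactness.

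The step I expect to be the main obstacle is exactly this precompactness of $\{\varphi_j(X_j)\}_j$ on $W$, equivalently compactness of $\overline{\bigcup_j\varphi_j(X_j)}$: as $W$ is only complete and separable, bounded closed sets need not be compact, so one must rule out the possibility that thin pieces of $\varphi_j(X_j)$ drift off through $W$ while the currents ${\varphi_j}_{\#}T_j$ still converge flatly. To handle it I would combine three ingredients: the uniform total boundedness of $\{X_j\}$ supplied by $X_j\GHto X_\infty$ (one function $\epsilon\mapsto N(\epsilon)$ bounding the cardinality of an $\epsilon$-net of every $X_j$); the inclusion $\varphi_\infty(X_\infty)\subseteq N_{\epsilon_j}(\varphi_j(X_j))$ proved above, which anchors each $\varphi_j(X_j)$ near the fixed compact set $\varphi_\infty(X_\infty)$; and the integral-current structure. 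Concretely, a failure of precompactness would produce an infinite $\epsilon$-separated family of points $w_k\in\varphi_{j_k}(X_{j_k})$ which, after discarding finitely many by compactness of $\varphi_\infty(X_\infty)$, all lie at distance $\ge\epsilon/3$ from $\spt\|{\varphi_\infty}_{\#}T_\infty\|$; localizing ${\varphi_{j_k}}_{\#}T_{j_k}$ to disjoint small balls about the $w_k$ --- cutting at good radii so the slice and restricted-boundary terms are small, which is where the flat convergence $\partial M_j\IFto\partial M_\infty$ is also used --- one obtains localized integral currents whose flat norms tend to $0$ while, via the Gromov--Hausdorff correspondence, each is an almost-isometric copy of a fixed-size ball in $X_\infty$ and so has mass bounded below by the density lower bound for integral currents. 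Reconciling ``flat norm $\to0$'' with ``definite mass on a ball of fixed size, with controlled boundary'' is the crux of the whole argument, and it is here once more that the coincidence $X_\infty=\set(M_\infty)$ of the two limit spaces is indispensable.
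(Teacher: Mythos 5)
Your strategy diverges from the paper's: the paper simply cites the proof technique of~\cite[Theorem~3.20]{SorWen2}, which works by starting from a \emph{compact} Gromov--Hausdorff embedding space, extracting a subsequential flat limit there by Ambrosio--Kirchheim compactness, and identifying that limit with $M_\infty$ via uniqueness of the intrinsic flat limit and the compact--self--isometric--embedding argument you also use. You instead start from an intrinsic flat embedding space $W$ and try to show that the same $\varphi_j$ already realize Hausdorff convergence. Your ``Inclusion~1'' and the proper--subset contradiction are fine, but this reduction only helps if you can actually carry out the precompactness step, and that is where the argument breaks down.

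The gap is in the precompactness claim, specifically the sentence asserting that each localized current near a drifting point $w_k$ is ``via the Gromov--Hausdorff correspondence, an almost-isometric copy of a fixed-size ball in $X_\infty$ and so has mass bounded below by the density lower bound for integral currents.'' Gromov--Hausdorff convergence gives a metric correspondence between $X_{j_k}$ and $X_\infty$; it carries no information whatsoever about the currents $T_{j_k}$ or their mass measures. In particular, the ball $B_{X_{j_k}}(w_k,r)$ being metrically close to a ball in $X_\infty$ does not give any lower bound on $\|T_{j_k}\|(B(w_k,r))$: for $n\ge 2$ a piece of $\set(T_{j_k})$ of diameter comparable to $r$ can have arbitrarily small $\mathcal H^n$-mass. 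The Ambrosio--Kirchheim density statement $\Theta^n_*(\|T\|,x)\ge 1$ holds only $\mathcal H^n$-a.e.\ on $\set(T)$ (and gives no uniform scale), and any monotonicity-type lower bound $\|T\|(B(x,r))\gtrsim r^n$ requires $r<\dist(x,\spt\partial T)$, which you have not controlled (the drifting points may be close to $\spt\partial T_{j_k}$, and you would also need Hausdorff control of $\spt\partial T_{j_k}$, which is exactly the kind of statement you are trying to prove). So the intended contradiction between ``flat norm $\to 0$'' and ``definite mass'' is not established. Beyond this, you would also need to verify the earlier unstated premise, namely that an \emph{arbitrary} $W$ from~\cite[Theorem 4.2]{SorWen2} already realizes Hausdorff convergence under these hypotheses; this is plausible but not obvious, and it is not what~\cite[Theorem 3.20]{SorWen2} does. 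The safer route, and the one the paper intends, is to work inside a compact GH embedding space, where Ambrosio--Kirchheim compactness gives the flat subconvergence for free and the only remaining task is to identify the limit current.
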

This proposition follows from the same reasoning that was used to prove~\cite[Theorem~3.20]{SorWen2}.

We now state some useful facts about point convergence in the intrinsic flat sense that were proved by Sormani.

\begin{lem}[{\cite[Lemma 3.4]{Sormani-AA}}]\label{lem-xinfty} 
Let $M_j=(X_j, d_j, T_j)$ be $n$-dimensional integral current spaces for $j\in \mathbb{N}\cup\{\infty\}$. If $M_j\IFZto M_\infty$, then for any $x_\infty\in \overline{X}_\infty$, there exist points $x_j\in X_j$ such that $(M_j, x_j)\IFZto (M_\infty, x_\infty)$.
\end{lem}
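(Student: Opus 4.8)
The plan is to work entirely inside the fixed ambient space $Z$ supplied by the hypothesis $M_j\IFZto M_\infty$, so that ${\varphi_j}_\#(T_j)\to{\varphi_\infty}_\#(T_\infty)$ in the flat sense in $Z$. Write $T_j':={\varphi_j}_\#(T_j)$ and $T_\infty':={\varphi_\infty}_\#(T_\infty)$, and let $S_j:=\set(T_j')=\overline{\varphi_j(X_j)}$ and $S_\infty:=\overline{\varphi_\infty(X_\infty)}$ denote the (closures of the) supports/canonical sets inside $Z$. The target point $\varphi_\infty(x_\infty)$ lies in $S_\infty$ since $x_\infty\in\overline{X}_\infty$. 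First I would recall the standard consequence of flat convergence of integral currents that the canonical sets converge appropriately: more precisely, $\spt(T_\infty')\subseteq Z$ is contained in the Kuratowski limit of the $\spt(T_j')$, i.e.\ for every $z\in\spt(T_\infty')$ there exist $z_j\in\spt(T_j')$ with $z_j\to z$ in $Z$. (This is the ``lower semicontinuity of support'' direction of flat convergence; it is exactly what is used in \cite{SorWen2} to show that intrinsic flat limits are captured by Hausdorff-type behavior of the representatives, and it does \emph{not} require the reverse Hausdorff inclusion, which can fail.) Applying this with $z=\varphi_\infty(x_\infty)$ yields points $w_j\in\spt(T_j')\subseteq S_j$ with $w_j\to\varphi_\infty(x_\infty)$.

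Next I would promote the points $w_j\in S_j=\overline{\varphi_j(X_j)}$ to genuine points of $\varphi_j(X_j)$, hence to points $x_j\in X_j$. Since $\varphi_j$ is a metric-isometric embedding and $X_j$ is the canonical set of the integral current space $M_j$, the set $\varphi_j(X_j)$ is dense in $S_j$; so choose $x_j'\in\varphi_j(X_j)$ with $d(x_j',w_j)<1/j$, and set $x_j:=\varphi_j^{-1}(x_j')\in X_j$. Then $\varphi_j(x_j)=x_j'\to\varphi_\infty(x_\infty)$ in $Z$ by the triangle inequality, because $d(\varphi_j(x_j),\varphi_\infty(x_\infty))\le d(x_j',w_j)+d(w_j,\varphi_\infty(x_\infty))\to 0$. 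Combined with the hypothesis $M_j\IFZto M_\infty$, this is precisely the assertion $(M_j,x_j)\IFZto(M_\infty,x_\infty)$ by the definition of pointed intrinsic flat convergence in a fixed $Z$.

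The one point that needs care—and the main potential obstacle—is the passage from $\spt(T_\infty')$ to all of $\overline{X}_\infty$: a priori the canonical set $X_\infty$ of an integral current space equals $\set(T_\infty)$, which coincides with $\spt(T_\infty)$ (the set of positive lower density), so $\varphi_\infty(X_\infty)=\varphi_\infty(\spt T_\infty)=\spt(T_\infty')$ up to the embedding, and $\overline{X}_\infty$ maps onto $\overline{\spt(T_\infty')}=\spt(T_\infty')$ since supports are closed. Thus the ``lower semicontinuity of support'' statement does apply at $z=\varphi_\infty(x_\infty)$, and no genuinely new current-theoretic input is required beyond what is already invoked for \thmref{Theorem 4.2 of SorWen2} earlier in the excerpt. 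I would state the support–semicontinuity fact as a short lemma (or cite it from \cite{SorWen2} / \cite{AmbrosioKirchheim}) rather than reprove it, keeping the argument above as the skeleton.
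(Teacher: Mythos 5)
Your argument is essentially correct, and the approach via lower semicontinuity of support under flat convergence together with density of the canonical set in the support is the natural one for this statement. Be aware, however, that the paper under review does not prove this lemma at all---it simply cites it as \cite[Lemma~3.4]{Sormani-AA}---so there is no in-paper proof for you to be compared against; your proof is effectively a reconstruction of what must appear in Sormani's source.

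Two small items worth tightening. First, your phrasing that $\set(T_\infty)$ ``coincides with'' $\spt(T_\infty)$ is not quite right: in general $\set(T)$ (the positive-lower-density set) is a proper, dense subset of $\spt(T)$, with $\overline{\set(T)}=\spt(T)$ for integral currents. This slip is harmless in your argument because you take closures immediately afterwards and the relevant fact is precisely $\overline{\varphi_\infty(X_\infty)}=\overline{\set(T_\infty')}=\spt(T_\infty')$, but the intermediate equality as stated would mislead a reader. Second, the lower semicontinuity of support you invoke deserves at least a one-line justification in the style of Ambrosio--Kirchheim metric currents rather than a bare citation: if $z\in\spt(T_\infty')$ but along some subsequence $\spt(T_{j_k}')\cap B(z,\epsilon)=\emptyset$, then $\|T_{j_k}'\|\bigl(B(z,\epsilon)\bigr)=0$, hence $T_{j_k}'(f,\pi)=0$ for every admissible tuple with $f$ supported in $B(z,\epsilon)$ (by the mass estimate $|T(f,\pi)|\le\prod\Lip(\pi_i)\int|f|\,d\|T\|$); since flat convergence implies pointwise convergence of currents, $T_\infty'(f,\pi)=0$ for all such $(f,\pi)$, forcing $\|T_\infty'\|\bigl(B(z,\epsilon)\bigr)=0$ and contradicting $z\in\spt(T_\infty')$. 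With that spelled out, your diagonal extraction of $w_j\in\spt(T_j')$ with $w_j\to\varphi_\infty(x_\infty)$, and the subsequent $1/j$-perturbation into $\varphi_j(X_j)$ by density, complete a valid proof.
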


For an integral current space $(X, d, T)$, a point $x\in X$, and $r>0$, we define:
\begin{equation*}
S(x,r) :=  ( \set(T \rstr {B}(x,r) ), d, T\rstr {B}(x,r)).
\end{equation*}

\begin{lem}[{\cite[Lemma 4.1]{Sormani-AA}}]\label{lem-AASorh} 
Let $M_j=(X_j, d_j, T_j)$ be $n$-dimensional integral current spaces for $j\in \mathbb{N}\cup\{\infty\}$. If $(M_j, x_j)\IFZto (M_\infty, x_\infty)$, then there is a subsequence $x_{j_k} \in X_{j_k}$ such that for almost
 every $r>0$, $S(x_{j_k},r)$ and $S(x_\infty,r)$ are integral currents spaces, and 
 \[\left(S(x_{j_k},r), x_{j_k}\right) \IFZto \left( S(x_\infty,r), x_\infty\right),\]
 with embedding maps given by restriction. 
 \end{lem}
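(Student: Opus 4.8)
\smallskip
\noindent\textbf{Proof strategy.} The plan is to push everything forward into the fixed ambient space $Z$ and reduce the assertion to a slicing argument for integral currents in $Z$, in the spirit of~\cite{SorWen2}. Set $S_j := {\varphi_j}_\#(T_j)$, $S_\infty := {\varphi_\infty}_\#(T_\infty)$, $p_j := \varphi_j(x_j)$ and $p_\infty := \varphi_\infty(x_\infty)$ (extending $\varphi_\infty$, $d_\infty$ to $\overline{X}_\infty$ if necessary); by hypothesis $\mathcal{F}_Z(S_j - S_\infty)\to 0$ and $p_j\to p_\infty$ in $Z$. Write $\rho(z):=d(z,p_\infty)$ and $\rho_j(z):=d(z,p_j)$ for the $1$-Lipschitz distance functions. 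Using that each $\varphi_j$ is an injective isometric embedding and that $\|S_j\|$ is concentrated on $\varphi_j(X_j)$, one identifies
\begin{align*}
{\varphi_j}_\#\!\left(T_j\rstr B(x_j,r)\right) &= S_j\rstr\{\rho_j\le r\},\\
{\varphi_\infty}_\#\!\left(T_\infty\rstr B(x_\infty,r)\right) &= S_\infty\rstr\{\rho\le r\}.
\end{align*}
Hence it suffices to produce a subsequence $x_{j_k}$ such that, for a.e. $r>0$, both restricted currents above are integral currents in $Z$ and $\mathcal{F}_Z\big(S_{j_k}\rstr\{\rho_{j_k}\le r\} - S_\infty\rstr\{\rho\le r\}\big)\to 0$; the point convergence is already in the hypothesis, and the embeddings of the slice spaces are by construction the restrictions of $\varphi_{j_k}$ and $\varphi_\infty$.

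For the ``integral current space'' claim I would invoke the slicing theorem for integral currents in metric spaces: for each fixed $j$ and all but countably many $r$, the slices $\langle S_j,\rho_j,r\rangle$ and $\langle S_\infty,\rho,r\rangle$ are integral currents of finite mass, so $S_j\rstr\{\rho_j\le r\}$ and $S_\infty\rstr\{\rho\le r\}$ are integral currents (their boundaries have finite mass); for a.e. $r$ the open and closed balls carry the same restriction, which is consistent with the $B(\cdot,r)$ being closed. Pulling back under the isometric embeddings makes $T_j\rstr B(x_j,r)$ and $T_\infty\rstr B(x_\infty,r)$ integral currents, so $S(x_j,r)$ and $S(x_\infty,r)$ are integral current spaces; a short lower-density argument confirms $x_j\in\set(T_j\rstr B(x_j,r))$ and $x_\infty\in\overline{\set(T_\infty\rstr B(x_\infty,r))}$. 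Taking the countable union over $j$ of the exceptional sets of $r$ keeps all of this valid for a.e. $r$.

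For the flat convergence, fix decompositions $S_j - S_\infty = A_j + \partial B_j$ with $\varepsilon_j := \mathbf{M}(A_j)+\mathbf{M}(B_j)\to 0$. Since the coarea inequality for slices gives $\int_0^\infty \mathbf{M}\big(\langle B_j,\rho_j,r\rangle\big)\,dr \le \mathbf{M}(B_j)\le\varepsilon_j$, the nonnegative functions $r\mapsto\mathbf{M}(\langle B_j,\rho_j,r\rangle)$ converge to $0$ in $L^1(0,\infty)$, so after passing to a subsequence I may assume $\mathbf{M}(\langle B_{j_k},\rho_{j_k},r\rangle)\to 0$ for a.e. $r$. Then for a.e. $r$,
\begin{align*}
S_{j_k}\rstr\{\rho_{j_k}\le r\} - S_\infty\rstr\{\rho\le r\}
&= \big(S_\infty\rstr\{\rho_{j_k}\le r\} - S_\infty\rstr\{\rho\le r\}\big)\\
&\quad + A_{j_k}\rstr\{\rho_{j_k}\le r\} + \partial\big(B_{j_k}\rstr\{\rho_{j_k}\le r\}\big) - \langle B_{j_k},\rho_{j_k},r\rangle,
\end{align*}
using $S_{j_k}=S_\infty+A_{j_k}+\partial B_{j_k}$ and the slicing identity for $\partial\big(B_{j_k}\rstr\{\rho_{j_k}\le r\}\big)$. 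The last three terms contribute at most $2\varepsilon_{j_k}+\mathbf{M}(\langle B_{j_k},\rho_{j_k},r\rangle)$ to $\mathcal{F}_Z$, which tends to $0$ for a.e. $r$. Since $|\rho_{j_k}-\rho|\le d(p_{j_k},p_\infty)=:\delta_{j_k}\to 0$, the first term is supported in the shell $\{\,|\rho-r|\le\delta_{j_k}\,\}$, so its mass is at most $\|S_\infty\|(\{\,|\rho-r|\le\delta_{j_k}\,\})$, and continuity from above of the finite measure $\|S_\infty\|$ forces this to $\|S_\infty\|(\{\rho=r\})$, which vanishes for all but countably many $r$. Intersecting the countably many relevant full-measure sets of $r$ completes the argument.

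I expect the only genuinely delicate point to be the ``moving basepoint'' term $S_\infty\rstr\{\rho_{j_k}\le r\}-S_\infty\rstr\{\rho\le r\}$: the naive estimate would compare $S_{j_k}\rstr\{\rho_{j_k}\le r\}$ directly with $S_{j_k}\rstr\{\rho\le r\}$ and bound the difference by the mass of $S_{j_k}$ in a thin shell, but intrinsic flat convergence provides no uniform mass bound on the $S_{j_k}$. The remedy above is to route this comparison through $S_\infty$, whose mass is finite, at the cost of the harmless errors $A_{j_k}\rstr\{\rho_{j_k}\le r\}$ and $(\partial B_{j_k})\rstr\{\rho_{j_k}\le r\}$, together with the measure-theoretic bookkeeping that lets one subsequence serve a co-null set of radii.
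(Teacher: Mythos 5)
The paper does not actually prove this lemma; it cites it directly from Sormani's~\cite{Sormani-AA} (Lemma~4.1), so there is no in-paper proof to compare against. Your reconstruction is, as far as I can tell, correct and is the natural slicing argument that one would expect (and that Sormani in fact uses): decompose $S_j-S_\infty=A_j+\partial B_j$ with $\mathbf{M}(A_j)+\mathbf{M}(B_j)\to 0$, apply the Ambrosio--Kirchheim coarea/slicing inequality to $B_j$ and Fatou to pass to a subsequence along which $\mathbf{M}(\langle B_{j_k},\rho_{j_k},r\rangle)\to 0$ for a.e.\ $r$, and then estimate the flat norm of the difference of restricted currents term by term. You have correctly identified the one genuinely delicate point, namely that the basepoint-moving error $S_\infty\rstr\{\rho_{j_k}\le r\}-S_\infty\rstr\{\rho\le r\}$ must be controlled through the \emph{finite} measure $\|S_\infty\|$ rather than $\|S_{j_k}\|$, since intrinsic flat convergence gives no uniform mass bound on the $S_{j_k}$; routing the comparison through $S_\infty$ at the cost of the $A_{j_k}$ and $\partial B_{j_k}$ errors is exactly the right fix.

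Two minor housekeeping points, neither of which threatens the argument. First, ``continuity from above'' for $\|S_\infty\|\bigl(\{|\rho-r|\le\delta_{j_k}\}\bigr)$ requires either passing to a further subsequence with $\delta_{j_k}$ monotone, or the elementary observation that for every $\epsilon>0$ one has $\limsup_k \|S_\infty\|\bigl(\{|\rho-r|\le\delta_{j_k}\}\bigr)\le \|S_\infty\|\bigl(\{|\rho-r|\le\epsilon\}\bigr)$, which then tends to $\|S_\infty\|(\{\rho=r\})$ as $\epsilon\to 0$. Second, the identity ${\varphi_j}_{\#}\bigl(T_j\rstr B(x_j,r)\bigr)=S_j\rstr\{\rho_j\le r\}$ does rely on the $\varphi_j$ being injective metric-isometric embeddings (pushforward and restriction do not commute for arbitrary Lipschitz maps), so it is worth stating that explicitly as you begin to do; together with the lower-density check that $x_j\in\set\bigl(T_j\rstr B(x_j,r)\bigr)$ and $x_\infty\in\overline{\set\bigl(T_\infty\rstr B(x_\infty,r)\bigr)}$, this legitimizes the identification of the restricted current spaces and their basepoints.
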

Note that the conclusion only holds for a subsequence rather than the full sequence.

\begin{thm}[{\cite[Theorem 7.1]{Sormani-AA}}]\label{thm-AASorh} 
Let $M_j=(X_j, d_j, T_j)$ be $n$-dimensional integral current spaces for $j\in \mathbb{N}\cup\{\infty\}$. Assume that $M_j \IFZto M_\infty$ and that there exist
$\delta >0$,  a function \hbox{$h:(0,\delta) \to (0, \infty)$}, and a sequence $x_j \in X_j$ such that 
for almost every $r \in (0,\delta)$,  
\begin{align}
\label{equation:nonzero}
\liminf_{j \to \infty} d_{\mathcal F}( S(x_j, r),  {\bf 0}) \geq h(r) >0.
\end{align}
Then there exist a subsequence $x_{j_k}$ and a point $x_\infty\in \overline{X}_\infty$ such that 
\[ (M_{j_k}, x_{j_k}) \IFZto (M_\infty, x_\infty).\]
\end{thm}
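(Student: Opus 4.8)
The plan is to carry out everything inside the given ambient space $Z$: first produce, after passing to a subsequence, a point $q_\infty \in Z$ with $\varphi_j(x_j)\to q_\infty$, and then show $q_\infty \in \varphi_\infty(\overline{X}_\infty)$. Write $z_j := \varphi_j(x_j)\in Z$ and $T' := {\varphi_\infty}_{\#}(T_\infty)$, an integer rectifiable current of finite mass in $Z$ whose mass measure $\|T'\|$ is finite and whose support lies in the closed set $\varphi_\infty(\overline{X}_\infty)$. The whole argument is driven by one estimate relating the restricted currents $(\varphi_j)_{\#}(T_j\rstr B(x_j,r))$ to restrictions of $T'$ to concentric balls around the moving centers $z_j$.

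\textbf{Step 1 (restriction estimate).} Since $M_j\IFZto M_\infty$, we may write ${\varphi_j}_{\#}(T_j) - T' = A_j + \partial B_j$ with $\mathbf{M}(A_j) + \mathbf{M}(B_j)\to 0$. Slicing $B_j$ by the $1$-Lipschitz function $\rho_j(\cdot) = d(\cdot, z_j)$ gives $\int_0^\infty \mathbf{M}(\langle B_j,\rho_j,r\rangle)\, dr \le \mathbf{M}(B_j)\to 0$, so after passing to a subsequence $\mathbf{M}(\langle B_j,\rho_j,r\rangle)\to 0$ for a.e.\ $r$. Restricting the identity ${\varphi_j}_{\#}(T_j) = T' + A_j + \partial B_j$ to the closed ball $B(z_j,r)$, using $(\partial B_j)\rstr B(z_j,r) = \partial(B_j\rstr B(z_j,r)) - \langle B_j,\rho_j,r\rangle$ together with $(\varphi_j)_{\#}(T_j\rstr B(x_j,r)) = ({\varphi_j}_{\#}T_j)\rstr B(z_j,r)$, one gets for a.e.\ $r$
\[
d_{\mathcal F}^{Z}\!\big((\varphi_j)_{\#}(T_j\rstr B(x_j,r)),\, T'\rstr B(z_j,r)\big) \le \mathbf{M}(A_j) + \mathbf{M}(\langle B_j,\rho_j,r\rangle) + \mathbf{M}(B_j)\longrightarrow 0,
\]
where $d_{\mathcal F}^{Z}$ is the flat distance in $Z$. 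Since the restriction of $\varphi_j$ is an admissible metric-isometric embedding of $S(x_j,r)$ into $Z$, we have $d_{\mathcal F}(S(x_j,r),\mathbf{0}) \le d_{\mathcal F}^{Z}((\varphi_j)_{\#}(T_j\rstr B(x_j,r)),0)$, so combining with the triangle inequality and the hypothesis \eqref{equation:nonzero} yields: for a.e.\ $r\in(0,\delta)$,
\[
\liminf_{j\to\infty} \|T'\|\big(B(z_j,r)\big) = \liminf_{j\to\infty} \mathbf{M}\big(T'\rstr B(z_j,r)\big) \ge \liminf_{j\to\infty} d_{\mathcal F}(S(x_j,r),\mathbf{0}) \ge h(r) > 0.
\]

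\textbf{Step 2 (extract a limit point in $\varphi_\infty(\overline{X}_\infty)$).} If $\{z_j\}$ had no Cauchy subsequence it would fail to be totally bounded, hence some subsequence would be $\eta$-separated for a fixed $\eta>0$; then the balls $B(z_{j_k},\eta/2)$ would be pairwise disjoint, and for a fixed good radius $r\le \eta/2$ in $(0,\delta)$ the finiteness of $\|T'\|$ would force $\|T'\|(B(z_{j_k},r))\to 0$, contradicting Step 1. So, $Z$ being complete, a subsequence $z_{j_k}\to q_\infty\in Z$. If $q_\infty\notin\varphi_\infty(\overline{X}_\infty)$, then since that set is closed in $Z$ and contains $\spt T'$, there is $\rho>0$ with $\|T'\|(B(q_\infty,2\rho)) = 0$; picking a good $r\in(0,\min(\delta,\rho))$, eventually $B(z_{j_k},r)\subseteq B(q_\infty,2\rho)$ and $\|T'\|(B(z_{j_k},r)) = 0$, again contradicting Step 1. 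Hence $q_\infty = \varphi_\infty(x_\infty)$ for some $x_\infty\in\overline{X}_\infty$, and since $M_{j_k}\IFZto M_\infty$ is inherited from $M_j\IFZto M_\infty$, we conclude $(M_{j_k},x_{j_k})\IFZto (M_\infty,x_\infty)$.

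\textbf{Main obstacle.} The technical heart is Step 1: transferring flat convergence of the full currents ${\varphi_j}_{\#}(T_j)$ in $Z$ to flat convergence of their restrictions to the \emph{moving} balls $B(z_j,r)$. This is exactly where the slicing inequality for the filling currents $B_j$ is needed, and it is the reason the conclusion is only for almost every $r$ and along a subsequence. Once that estimate is in hand, everything else is soft: the uniform lower bound $h(r)>0$ on the limiting mass in balls around $z_j$ simultaneously prevents $z_j$ from dispersing (no totally-bounded obstruction) and from converging to a point off $\varphi_\infty(\overline{X}_\infty)$.
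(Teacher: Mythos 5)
This theorem is cited from Sormani~\cite{Sormani-AA} rather than proved in the present paper, so there is no in-paper proof to compare against; your argument therefore has to stand on its own, and it does. It is correct, and it is essentially the same argument as Sormani's: extract a filling decomposition ${\varphi_j}_{\#}(T_j)-T'=A_j+\partial B_j$ with vanishing masses, slice $B_j$ by the distance $\rho_j$ to the moving point to control restrictions to $B(z_j,r)$ for a.e.\ $r$ along a subsequence, deduce $\liminf_j\|T'\|(B(z_j,r))\ge h(r)>0$, and use finiteness of $\|T'\|$ (no escape/dispersion) together with closedness of $\varphi_\infty(\overline{X}_\infty)$ and completeness of $Z$ to locate the limit point $x_\infty\in\overline{X}_\infty$. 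All the technical ingredients you invoke are in order: the slicing inequality $\int_0^\infty\mathbf{M}(\langle B_j,\rho_j,r\rangle)\,dr\le\mathbf{M}(B_j)$ and $L^1\Rightarrow$ a.e.\ subsequential convergence; the identity $(\partial B_j)\rstr B(z_j,r)=\partial(B_j\rstr B(z_j,r))-\langle B_j,\rho_j,r\rangle$; the fact that $({\varphi_j}_{\#}T_j)\rstr B(z_j,r)={\varphi_j}_{\#}(T_j\rstr B(x_j,r))$ because $\varphi_j$ is a metric-isometric embedding; the bound $d_{\mathcal F}(S(x_j,r),\mathbf{0})\le d_{\mathcal F}^Z({\varphi_j}_{\#}(T_j\rstr B(x_j,r)),0)$ from taking the particular embedding; that a sequence with no Cauchy subsequence in a complete space has an $\eta$-separated subsequence; and that $\varphi_\infty(\overline{X}_\infty)$ is closed (complete subspace) and contains $\spt T'$. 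The only mild caveats worth flagging are bookkeeping ones you already handle implicitly: the a.e.\ control of the slices holds only along the subsequence you extract (and the hypothesis~\eqref{equation:nonzero} on the full sequence passes to subsequences because $\liminf$ can only increase), and the ``good'' radius $r$ in Step~2 must be chosen from the full-measure set where the Step~1 estimate holds, which is possible since that set is dense in $(0,\delta)$.
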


Our first result concerns compatibility of point convergence $(M_j, x_j)\IFZto (M_\infty, x_\infty)$ with respect to different choices of $Z$, and also compatibility with point convergence in converging subsets. 
Given an integral current space $M=(X, d, T)$ and a subset $V\subset X$, we define 
\[ M\rstr V:=  ( \set(T \rstr V ), d, T\rstr V ).\]
So for example, for $x\in X$ and $r>0$, $S(x, r):= M\rstr {B}(x, r)$.

\begin{thm}\label{theorem:point-convergence}
Let $M_j=(X_j, d_j, T_j)$ be $n$-dimensional integral current spaces for $j\in \mathbb{N}\cup\{\infty\}$, and for $j\in \mathbb{N}$, let $x_j\in V_j\subset X_j$  such that  $M_j\rstr V_j$ is a $n$-dimensional integral current space.
  Assume the following:
\begin{enumerate}
\item  \label{item:subset-converge} 
 $(M_j\rstr V_j, x_j) \IFWto (N_\infty, x_\infty)$ for some integral current space $N_\infty$, some point $x_\infty\in \overline{\set(N_\infty)}$, and some choice of $W$ (and embedding maps).
\item \label{item:metric-ball} There exists $\delta >0$ such that the metric ball ${B}(x_j, \delta)\subset X_j$ is entirely contained  in $V_j$ for all large $j$.

\end{enumerate}
Then for any choice of $Z$ (and embedding maps) such that $M_j \IFZto M_\infty$, there exist a  subsequence $x_{j_k}$ and a point $x'_\infty\in \overline{X}_\infty$ such that 
\[ (M_{j_k}, x_{j_k}) \IFZto (M_\infty, x'_\infty) .\]
\end{thm}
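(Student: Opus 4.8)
The plan is to reduce the statement to an application of Theorem~\ref{thm-AASorh} by producing, from the hypotheses, a lower bound of the form \eqref{equation:nonzero} for the balls $S(x_j, r) \subset M_j$ (balls taken in $M_j$, not in $M_j\rstr V_j$). The key observation is that hypothesis~\eqref{item:metric-ball} forces these two notions of ball to agree for small radii: since $B(x_j,\delta)\subset V_j$, for every $r<\delta$ the metric ball $B(x_j,r)$ computed in $X_j$ coincides with the metric ball of radius $r$ around $x_j$ computed in $\set(T_j\rstr V_j)$, because any path or distance realizer staying within distance $r<\delta$ of $x_j$ stays inside $V_j$. Consequently $T_j\rstr B(x_j,r) = (T_j\rstr V_j)\rstr B(x_j,r)$ as currents, so the integral current space $S(x_j,r)$ built inside $M_j$ is literally the same as the one built inside $M_j\rstr V_j$.

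Next I would extract the needed lower bound on $d_{\mathcal F}(S(x_j,r),\mathbf 0)$. Apply Lemma~\ref{lem-AASorh} to the convergence in hypothesis~\eqref{item:subset-converge}: passing to a subsequence, for almost every $r\in(0,\delta)$ we have $(S(x_j,r), x_j)\IFWto (S_{N_\infty}(x_\infty,r), x_\infty)$, where $S_{N_\infty}(x_\infty,r):= N_\infty\rstr B(x_\infty,r)$. Now I split into two cases according to whether $x_\infty$ gives nontrivial small balls. If there is some $\delta'\in(0,\delta)$ and a function $h:(0,\delta')\to(0,\infty)$ with $\mathbf M\big(S_{N_\infty}(x_\infty,r)\big)\ge h(r)>0$ for a.e.\ $r<\delta'$ — equivalently $x_\infty$ is not an isolated-in-measure point — then by lower semicontinuity of mass under intrinsic flat convergence (or directly: $d_{\mathcal F}$-convergence to a space of positive mass forces eventual positive flat distance from $\mathbf 0$) we get $\liminf_j d_{\mathcal F}(S(x_j,r),\mathbf 0)\ge h(r)>0$ for a.e.\ $r<\delta'$. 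Since $S(x_j,r)$ is the same whether computed in $M_j\rstr V_j$ or in $M_j$, hypothesis \eqref{equation:nonzero} of Theorem~\ref{thm-AASorh} holds for the sequence $x_j\in X_j$ and the convergence $M_j\IFZto M_\infty$; that theorem then yields the desired subsequence $x_{j_k}$ and limit point $x'_\infty\in\overline{X}_\infty$ with $(M_{j_k},x_{j_k})\IFZto(M_\infty,x'_\infty)$.

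The remaining case is when $S_{N_\infty}(x_\infty,r)$ has zero mass (is the zero current) for all small $r$ — i.e.\ $x_\infty$ is an isolated point of $\set(N_\infty)$ in the measure-theoretic sense, or $x_\infty$ lies in the completion but not in the support. I expect this to be the main obstacle, and I would handle it by a pigeonhole/diagonal argument on the radii: if the balls $S(x_j,r)$ have flat norm tending to zero for every fixed small $r$ along the whole sequence, one still needs to locate $x'_\infty$. Here the resolution is that such behavior means $x_j$ is ``escaping'' in $M_j$ in a controlled way; one chooses radii $r\to 0$ slowly enough (a single sequence $r_k\downarrow 0$) and appeals again to Lemma~\ref{lem-AASorh} together with a direct estimate: because $\varphi_j^W(x_j)$ converges in $W$ and the restriction maps are isometric, the images $\varphi_j^Z(x_j)$ form a bounded (hence, after a subsequence, convergent) sequence in $Z$ once we know $d_Z(\varphi_j^Z(x_j),\varphi_j^Z(X_j^{\mathrm{nice}}))$ is controlled — and in the degenerate case one shows the limit point must be a point of $\overline{X}_\infty$ by the same completeness argument used in the proof of Lemma~\ref{lem-xinfty}. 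Then one sets $x'_\infty$ to be that limit and verifies $(M_{j_k},x_{j_k})\IFZto(M_\infty,x'_\infty)$ directly from $\varphi_{j_k}^Z(x_{j_k})\to x'_\infty$ in $Z$. In practice it may be cleanest to observe that the degenerate case cannot actually occur under the hypotheses — if $B(x_j,\delta)\subset V_j$ and $M_j\rstr V_j$ is genuinely $n$-dimensional, the balls $S(x_j,r)$ carry mass bounded below by a dimensional constant times $r^n$ (lower Euclidean-type volume bound for integral current spaces arising as restrictions of manifolds, or more carefully, the lower semicontinuity forces the limit to be nonzero for a.e.\ $r$) — so that the first case always applies and Theorem~\ref{thm-AASorh} finishes the proof.
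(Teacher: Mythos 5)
Your first case is, in substance, the paper's proof: use hypothesis~\eqref{item:metric-ball} to identify, for $r<\delta$, the ball $S(x_j,r)$ computed inside $M_j\rstr V_j$ with the one computed inside $M_j$; apply Lemma~\ref{lem-AASorh} to the convergence in hypothesis~\eqref{item:subset-converge} to get, along a subsequence, $S(x_{j_k},r)\IFWto S(x_\infty,r)$ for a.e.\ $r\in(0,\delta)$; conclude $d_{\mathcal F}(S(x_{j_k},r),\mathbf 0)\ge \tfrac12 d_{\mathcal F}(S(x_\infty,r),\mathbf 0)>0$ for large $k$; and feed this $h(r)$ into Theorem~\ref{thm-AASorh}. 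One small point of care: the lower bound~\eqref{equation:nonzero} is only available along the extracted subsequence, so Theorem~\ref{thm-AASorh} must be applied to the convergence $M_{j_k}\IFZto M_\infty$ with the points $x_{j_k}$; it then yields a further subsequence, which is all the conclusion requires.

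The genuine gap is your treatment of the ``degenerate'' case. That case is vacuous, but for a reason you never invoke: hypothesis~\eqref{item:subset-converge} stipulates $x_\infty\in\overline{\set(N_\infty)}$, and every point of $\set(N_\infty)$ has positive lower density with respect to the mass measure of $N_\infty$; hence every ball $B(x_\infty,r)$ with $r>0$ carries positive mass, so for a.e.\ $r\in(0,\delta)$ the space $S(x_\infty,r)=N_\infty\rstr B(x_\infty,r)$ is a nonzero integral current space and therefore $d_{\mathcal F}(S(x_\infty,r),\mathbf 0)>0$. The substitutes you propose do not work: the theorem concerns general integral current spaces, so there is no Riemannian structure to furnish a Euclidean-type lower bound $c\,r^n$ on $\M(S(x_j,r))$ (and a mass lower bound on $S(x_j,r)$ would not by itself bound $d_{\mathcal F}(S(x_j,r),\mathbf 0)$ from below, since thin spaces of large mass can be flat-close to $\mathbf 0$); lower semicontinuity of mass runs in the wrong direction, since it permits nonzero spaces to converge to the zero space and so cannot ``force the limit to be nonzero''; and the sketched direct argument via escaping points and bounded sequences in $Z$ is not a proof---if the limit balls really were zero, the points could genuinely disappear in the limit, which is exactly why Theorem~\ref{thm-AASorh} carries hypothesis~\eqref{equation:nonzero}. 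Replace the case distinction by the one-line observation that $x_\infty\in\overline{\set(N_\infty)}$ forces $S(x_\infty,r)\neq\mathbf 0$, and your argument coincides with the paper's.
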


\begin{rmrk}
Note that the conclusion is nontrivial even when $V_j = X_j$, in which case the assumption~\eqref{item:metric-ball} trivially holds. Note that even in this case, it need not be true that $x'_\infty=x_\infty$.
\end{rmrk}

\begin{proof} 
By assumption~\eqref{item:metric-ball}, for all $r\in (0, \delta)$ and all large $j$, we have 
\[ (M_j\rstr V_j)  \rstr {B^{V_j}}(x_j, r) = M_j\rstr {B^{X_j}}(x_j, r),\]
 so we can unambiguously refer to both spaces as $S(x_j, r)$. 
So by Lemma~\ref{lem-AASorh} and assumption~\eqref{item:subset-converge}, there exists a subsequence $x_{j_k}$ such that for almost every $r\in (0, \delta)$, 
 $S(x_{j_k}, r)\IFWto S(x_\infty, r)$, where $S(x_\infty, r)=N_\infty\rstr {B}(x_\infty, r)$.  So for large $k$, we obviously have
\[ d_{\mathcal{F}}(S(x_{j_k}, r), \mathbf{0}) > \tfrac{1}{2} d_{\mathcal{F}}(S(x_\infty, r), \mathbf{0})>0.\]
Taking $h(r):=\tfrac{1}{2} d_{\mathcal{F}}(S(x_\infty, r), \mathbf{0})$, we see that $S(x_{j_k}, r)$ satisfies the hypotheses of Theorem~\ref{thm-AASorh}. The result then follows from applying  
Theorem~\ref{thm-AASorh} to the convergence $M_{j_k} \IFZto M_\infty$ with points $x_{j_k}\in X_{j_k}$.
\end{proof}

Proposition~\ref{prop:GHIF} tells us that if we have both Gromov-Hausdorff and intrinsic flat convergence, it is possible to find a common embedding space $Z$ in which both types of convergence is ``realized.'' 
Theorem~\ref{theorem:GH}\eqref{item:H-convergence} below shows that
if we have intrinsic flat convergence of spaces whose \emph{boundaries} converge in the Gromov-Hausdorff sense, then again, we can see that both types of convergence are ``realized'' in the same embedding space. (Recall that $n$-dimensional intrinsic flat convergence always implies $(n-1)$-dimensional intrinsic flat convergence of the boundaries.) Roughly speaking, the proofs of~\cite[Theorem 1.4 and Lemma 5.1]{HLS} were written in such a way that they assumed that this theorem is true.

\begin{thm}\label{theorem:GH}
Let $M_j=(X_j, d_j, T_j)$ be $n$-dimensional integral current spaces for $j\in \mathbb{N}\cup\{\infty\}$. 
 Assume $M_j\IFZto M_\infty$ and that we can decompose $\partial M_j=\partial_1 M_j + \partial_2 M_j$ such that
$\partial_2 M_j \IFto \bf{0}$. (In other words, some parts of the boundary are negligible in the intrinsic flat limit.) Define $\Sigma_j:=\set(\partial_1 M_j)$ and $\Sigma_\infty:=\set(\partial M_\infty)$.  

The following statements hold:
\begin{enumerate}[(i)]
 \item If  $(M_j, x_j) \IFZto (M_\infty, x_\infty)$,  then $d_\infty(x_\infty, \Sigma_\infty)\ge \displaystyle \limsup_{j\to\infty} d_j(x_j, \Sigma_j)$. \label{item:bound}
 \item If $\Sigma_j$, $\Sigma_\infty$ are compact  and 
   $(\Sigma_j, d_j)\GHto(\Sigma_\infty, d_\infty)$, then  $(\Sigma_j, d_j)\GHZto(\Sigma_\infty, d_\infty)$.\label{item:H-convergence}
\end{enumerate}
\end{thm}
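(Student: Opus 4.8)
The plan is to prove the two parts separately, with part~(i) a soft ``distance cannot increase in the limit'' argument and part~(ii) a Blaschke/diagonal extraction argument built on top of the flat convergence we already have in $Z$. Throughout I will work inside the fixed ambient space $Z$, identifying $X_j$ with $\varphi_j(X_j)$, $T_j$ with ${\varphi_j}_\#(T_j)$, and writing $d$ for $d_Z$.

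\textbf{Part (i).} First I would fix $\varepsilon>0$ and choose $p_j\in\Sigma_j$ with $d_j(x_j,\Sigma_j)\ge d_j(x_j,p_j)-\varepsilon = d(\varphi_j(x_j),\varphi_j(p_j))-\varepsilon$; here I use that the $\varphi_j$ are metric-isometric embeddings. The only real content is to produce a subsequential limit point $p_\infty\in\Sigma_\infty$ of $\{\varphi_j(p_j)\}$. Since $\partial_2 M_j\IFto\mathbf 0$ and $\partial_1 M_j = \partial M_j - \partial_2 M_j$, we have $\partial_1 M_j \IFZto \partial M_\infty$ in $Z$ (flat convergence of boundaries: $\partial$ commutes with pushforward, $\partial T_j\to\partial T_\infty$ in the flat sense because $T_j\to T_\infty$ does, and $\partial_2 M_j\to\mathbf 0$). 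By Lemma~\ref{lem-AASorh} applied to the convergence $(M_j\rstr?\ )$—actually more cleanly: $\Sigma_\infty=\set(\partial M_\infty)$ is the canonical set of the flat limit of the $\partial_1 M_j$, so for any neighborhood of a point, mass is eventually present nearby; the standard fact (as in the proof of Lemma~\ref{lem-xinfty}) is that $\spt(\partial_1 M_j)\to\spt(\partial M_\infty)$ in the appropriate one-sided Hausdorff sense in $Z$, hence $\limsup_j d(\varphi_j(p_j),\Sigma_\infty)=0$. Passing to a subsequence realizing the $\limsup$ of $d_j(x_j,\Sigma_j)$, extract a convergent subsequence $\varphi_j(p_j)\to p_\infty\in\overline{\Sigma_\infty}$ (using that the tail of $\{\varphi_j(p_j)\}$ lies in a bounded, hence relatively compact, neighborhood of the compact-or-at-least-complete $\Sigma_\infty$—if $\Sigma_\infty$ need not be compact here, one argues instead that $d(\varphi_j(x_j),\Sigma_\infty)\le d(\varphi_j(x_j),\varphi_j(p_j))$ plus the Hausdorff error, then let $j\to\infty$). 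Then
\[
d_\infty(x_\infty,\Sigma_\infty)\le d(\varphi_\infty(x_\infty),p_\infty)=\lim_j d(\varphi_j(x_j),\varphi_j(p_j))\le \limsup_j d_j(x_j,\Sigma_j)+\varepsilon,
\]
and letting $\varepsilon\to0$ finishes it. I would double-check whether the statement wants $d_\infty(x_\infty,\Sigma_\infty)$ computed in $\overline X_\infty$; since $\varphi_\infty$ is an isometric embedding of $\overline X_\infty$, the displayed chain is literally valid.

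\textbf{Part (ii): the main obstacle.} Here the subtlety—and the step I expect to be hardest—is that we are handed an \emph{abstract} Gromov--Hausdorff convergence $(\Sigma_j,d_j)\GHto(\Sigma_\infty,d_\infty)$ and a \emph{separate} intrinsic flat convergence $M_j\IFZto M_\infty$, and we must show the given $Z$ (with its given $\varphi_j$) already realizes the GH convergence of the boundaries, i.e.\ $\varphi_j(\Sigma_j)\to\varphi_\infty(\Sigma_\infty)$ in the Hausdorff sense in $Z$. One direction is exactly what part~(i)'s support argument gives: $\varphi_j(\Sigma_j)\to\subset\varphi_\infty(\Sigma_\infty)$, i.e.\ every limit point of points of $\varphi_j(\Sigma_j)$ lies in $\overline{\Sigma_\infty}=\Sigma_\infty$ (compactness), and more quantitatively $\sup_{p\in\Sigma_j} d(\varphi_j(p),\varphi_\infty(\Sigma_\infty))\to0$. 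The reverse inclusion—every point of $\varphi_\infty(\Sigma_\infty)$ is approximated by points of $\varphi_j(\Sigma_j)$—is where I would invoke the hypothesis $(\Sigma_j,d_j)\GHto(\Sigma_\infty,d_\infty)$ together with compactness. Concretely: suppose not; then there is $q_\infty\in\Sigma_\infty$, $\rho>0$, and a subsequence with $d(\varphi_j(\Sigma_j),\varphi_\infty(q_\infty))\ge\rho$. Consider a maximal $\rho/4$-separated net of $\Sigma_\infty$; by the abstract GH convergence each net point has an $o(1)$-close counterpart in $\Sigma_j$ (as abstract metric spaces), and the cardinality of such nets is uniformly bounded (GH-precompactness), so $\Sigma_j$ has a $\rho/4$-net of bounded cardinality. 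Now I'd use the one-sided Hausdorff convergence $\varphi_j(\Sigma_j)\to\subset\varphi_\infty(\Sigma_\infty)$ already proved, plus the fact that $\varphi_j|_{\Sigma_j}$ is an isometry onto its image, to transport a $\rho/4$-net of $\Sigma_j$ into $Z$ near $\Sigma_\infty$; passing to a further subsequence, these finitely many net points converge in $Z$ to points of $\Sigma_\infty$ forming a $\rho/4$-net of $\Sigma_\infty$ \emph{for the metric $d$ restricted to $\varphi_\infty(\Sigma_\infty)$, which equals $d_\infty$}—and the abstract GH limit of the nets of $\Sigma_j$ is $(\Sigma_\infty,d_\infty)$ by uniqueness of GH limits, so in particular $q_\infty$ is within $\rho/4$ of a net point, contradicting $d(\varphi_j(\Sigma_j),\varphi_\infty(q_\infty))\ge\rho$ for large $j$.

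\textbf{Remarks on execution.} The clean way to package part~(ii) is: (a) from part~(i)'s reasoning, $\varphi_j(\Sigma_j)$ converges to a subset $K\subseteq\varphi_\infty(\Sigma_\infty)$ in the Hausdorff sense after passing to a subsequence (use that $\bigcup_j\varphi_j(\Sigma_j)\cup\varphi_\infty(\Sigma_\infty)$ is precompact in $Z$—bounded and the $\Sigma_j$ are uniformly bounded by GH-precompactness, closed subsets of complete $Z$—so Blaschke gives Hausdorff-subconvergence of $\varphi_j(\Sigma_j)$ to some compact $K$); (b) since each $\varphi_j|_{\Sigma_j}:(\Sigma_j,d_j)\to(K,d)$ is, in the Hausdorff-convergence limit, an $o(1)$-isometry, $(\Sigma_j,d_j)\GHto(K,d)$; (c) by uniqueness of GH limits, $(K,d)$ is isometric to $(\Sigma_\infty,d_\infty)$; since also $K\subseteq\varphi_\infty(\Sigma_\infty)\cong(\Sigma_\infty,d_\infty)$ isometrically and $\Sigma_\infty$ is compact, an isometric subset of a compact metric space isometric to the whole is the whole, so $K=\varphi_\infty(\Sigma_\infty)$; (d) the limit being independent of the subsequence, the full sequence converges. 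The one technical point worth care is step~(c)'s ``isometric proper subset cannot be isometric to the whole compact space''—this is standard (a consequence of the fact that compact metric spaces are co-Hopfian under isometric embeddings), and I would cite it rather than reprove it. I do not anticipate needing anything about currents beyond $\set(\partial M_\infty)$ being a Borel subset of $\overline{X_\infty}$ with closure $\Sigma_\infty$ and the mass-measure-support compatibility already used implicitly in Lemmas~\ref{lem-xinfty}--\ref{lem-AASorh}.
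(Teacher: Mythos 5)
You choose near-minimizers $p_j\in\Sigma_j$ and conclude $d_\infty(x_\infty,\Sigma_\infty)\le d(\varphi_\infty(x_\infty),p_\infty)\le \limsup_j d_j(x_j,\Sigma_j)+\varepsilon$, which is the reverse of the stated $d_\infty(x_\infty,\Sigma_\infty)\ge \limsup_j d_j(x_j,\Sigma_j)$. The correct argument must go the other way: start from $y\in\Sigma_\infty$, use Lemma~\ref{lem-xinfty} (applied to $\partial_1 M_j\IFZto\partial M_\infty$) to produce $y_j\in\Sigma_j$ with $\varphi_j(y_j)\to\varphi_\infty(y)$, and then exploit $d_j(x_j,y_j)\ge d_j(x_j,\Sigma_j)$ before taking the infimum over $y$. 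Your approach also leans on the claim that $\spt(\partial_1 M_j)\to\spt(\partial M_\infty)$ in the sense that points of $\spt(\partial_1 M_j)$ get uniformly close to $\spt(\partial M_\infty)$. This is \emph{false} for flat convergence in general: mass can cancel (a shrinking sphere at a fixed point converges flatly to $0$ while its supports accumulate at that point, which is not in $\spt(0)=\emptyset$). Lemma~\ref{lem-xinfty} gives only the opposite half: every point of the limit support is approximated by points of the $\spt(T_j)$, not vice versa.

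\textbf{Part (ii) has the two halves of Hausdorff convergence swapped.} You assert that ``$\varphi_j(\Sigma_j)\to\subset\varphi_\infty(\Sigma_\infty)$'' follows from the part~(i) support argument, and that the reverse inclusion (points of $\varphi_\infty(\Sigma_\infty)$ approximated by $\varphi_j(\Sigma_j)$) is where the GH hypothesis is needed. It is exactly the other way around. The inclusion ``every point of $\varphi_\infty(\Sigma_\infty)$ is approximated by $\varphi_j(\Sigma_j)$'' is the easy half and follows from $\partial_1 M_j\IFZto\partial M_\infty$ and Lemma~\ref{lem-xinfty} alone, with no GH hypothesis and a simple compactness argument in $\Sigma_\infty$. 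The hard half is showing that points of $\varphi_j(\Sigma_j)$ cannot stay a fixed distance away from $\varphi_\infty(\Sigma_\infty)$ in $Z$; by the cancellation example above this is false in general, and is precisely where the GH hypothesis is essential. The paper handles it by passing to a \emph{common} embedding $W$ realizing both $\Sigma_j\GHWto\Sigma_\infty$ and $\partial_1 M_j\IFWto\partial M_\infty$ (Proposition~\ref{prop:GHIF}), extracting a converging point there, and then transferring the point convergence back to the original $Z$ via Theorem~\ref{theorem:point-convergence}. Your Blaschke packaging does not supply a substitute for this transfer; it also has an unjustified step at the start: boundedness of $\bigcup_j\varphi_j(\Sigma_j)$ in $Z$ does not follow from $\Sigma_j$ having uniformly bounded diameter, since the isometric images could drift off in $Z$ while still converging in flat norm. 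In short, part~(i) is wrong in direction, and part~(ii) mislocates both where the GH hypothesis enters and what a flat-norm hypothesis can control.
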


\begin{proof}
We prove \eqref{item:bound}: Assume $(M_j, x_j) \IFZto (M_\infty, x_\infty)$ with embedding maps $\varphi_j$.
For any $y\in \Sigma_\infty$, we will estimate $d_\infty(x_\infty, y)$ from below. Our hypotheses imply that $\partial_1 M_j \IFZto \partial M_\infty$, so we can apply Lemma~\ref{lem-xinfty} to see that there exists $y_j\in \Sigma_j$ such that $\varphi_{j}(y_j) \to\varphi_\infty(y)$ in $Z$. So 
\begin{align*}
	d_\infty(x_\infty, y) &= d_Z(\varphi_\infty(x_\infty), \varphi_\infty( y))=\lim_{j\to \infty} d_Z (\varphi_{j} (x_j), \varphi_j (y_j)) \\
	&= \limsup_{j\to \infty} d_j (x_j, y_j)\ge  \limsup_{j\to \infty}  d_j (x_j , \Sigma_j ). 
\end{align*}
The result follows by taking the infimum over  $y\in \Sigma_\infty$.
 
We prove \eqref{item:H-convergence}: We assume that  $M_j\IFZto M_\infty$ with embedding maps
$\varphi_j$, and also that $\Sigma_j\GHto \Sigma_\infty$.
Suppose, to get a contradiction, that $\varphi_j (\Sigma_j)$ does not converge to  $\varphi_\infty(\Sigma_\infty)$ in the Hausdorff sense in~$Z$. Then there exists $\epsilon>0$ such that one of the following two cases must occur:
\begin{itemize}
\item There exist a subsequence of $\Sigma_j$, still indexed by $j$, and points $z_j \in \Sigma_\infty$ such that 
\begin{align}\label{equation:points}
	d_Z (\varphi_{j}(\Sigma_j), \varphi_\infty(z_j) ) >\epsilon,
\end{align}
\item  There exist a subsequence of $\Sigma_j$, still indexed by $j$, and points $y_{j}\in \Sigma_{j}$ such that 
\begin{align} \label{equation:points2}
	d_Z (\varphi_{j}(y_{j}), \varphi_\infty(\Sigma_{\infty}) ) >\epsilon.
\end{align}
\end{itemize}

We discuss the first case. By compactness of $\Sigma_\infty$, there is a subsequential limit $z_\infty \in \Sigma_\infty$. By Lemma~\ref{lem-xinfty}, there exist points $y_j\in \Sigma_j$ such that $\varphi_j(y_j) \to \varphi_\infty(z_\infty)$ in $Z$, but this contradicts equation~\eqref{equation:points}.

We discuss the second case. Since $\Sigma_j$, $\Sigma_\infty$ are compact and we have both $\Sigma_j\GHto\Sigma_\infty$ and $\partial_1 M_j\IFto \partial M_\infty$, Proposition~\ref{prop:GHIF} tells us that there exist a separable complete metric space $W$ and maps $\psi_j$ such that $\Sigma_j \GHWto \Sigma_\infty$ and $\partial_1 M_j \IFWto \partial M_\infty$. In particular, 
 $d_W (\psi_{j}(y_{j}), \psi_\infty(\Sigma_{\infty}) ) \to 0$. 
 Since $\psi_\infty(\Sigma_{\infty})$ is compact, it follows that there is a subsequence of $\psi_j(y_j)$, which we still index by $j$, that converges to something in $\psi_\infty(\Sigma_{\infty})$. So there exists $y_\infty\in \Sigma_{\infty}$ such that 
 \[ (\partial_1 M_j, y_j)  \IFWto (\partial M_\infty, y_\infty). \]
 So by Theorem~\ref{theorem:point-convergence} applied to the convergence $\partial_1 M_j\IFZto \partial M_\infty$ (and $V_j$ equal to the full space $\Sigma_j$), it follows that there exist a subsequence of $y_j$, still indexed by $j$ and $y'_\infty\in \overline{\Sigma}_\infty=\Sigma_\infty$ such that 
  \[ (\partial_1 M_j, y_j)  \IFZto (\partial M_\infty, y'_\infty). \]
In particular, $\varphi_j(y_j) \to \varphi_\infty(y'_\infty)\in \varphi_\infty(\Sigma_\infty)$, which
 contradicts~\eqref{equation:points2}. 
\end{proof}

Recall that in Lemma~\ref{lem-AASorh}, the conclusion only holds for a subsequence and not necessarily for the original sequence. An elementary theorem of analysis says that  if every subsequence has a subsequence that converges to the same thing, then the original sequence itself must also converge to the same thing. The reason why this principle does not apply to Lemma~\ref{lem-AASorh} is the ``almost every'' part of the conclusion: For any \emph{fixed} radius $r$, we do not know that every subsequence has a converging subsequence. The following proposition explains how we can get around this problem when the integral current spaces are Riemannian manifolds and \emph{intrinsic flat volume convergence} holds. We remark that a result such as this is needed to prove that convergence holds for the original sequence rather than just for a subsequence (even if one only wants the conclusion for almost every $R$). For example, see Theorem~\ref{thm:HLSmain2} below.

\begin{thm}\label{theorem:subsequences}
Let $(M_j, g_j)$ be  Riemannian manifolds with $x_j\in M_j$, for $j\in\mathbb{N}\cup\{\infty\}$. Assume that
every subsequence of $x_{j_k}$ of $x_j$ has a subsequence $x_{{j_k}_\ell}$ such that for almost every $R>0$, 
\[ S(x_{{j_k}_\ell}, R) \VFto S(x_\infty, R).\]

Then for all $R>0$, 
\[ S(x_{j}, R) \VFto S(x_\infty, R).\]
\end{thm}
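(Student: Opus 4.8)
The plan is to exploit the fact that intrinsic flat volume convergence ($\VFto$) for integral current spaces carries with it convergence of the total mass (volume), which is a \emph{scalar} quantity, and scalar sequences \emph{do} obey the subsequence principle. First I would fix $R>0$ and consider the sequence of numbers $v_j(R):=\mass(S(x_j,R))=\vol(B(x_j,R))$. The hypothesis says every subsequence of $x_j$ has a further subsequence along which $S(x_{j_{k_\ell}},R)\VFto S(x_\infty,R)$ for a.e.\ $R$; since $\VFto$ is by definition $\IFto$ together with mass convergence, along such a subsequence $v_{j_{k_\ell}}(R)\to v_\infty(R)$ for a.e.\ $R$. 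But $v_j$ and $v_\infty$ are monotone nondecreasing in $R$, so a.e.\ convergence upgrades to convergence at \emph{every} continuity point of the (countable-discontinuity) limit function $v_\infty$, and in fact one can pin down every $R$ by a sandwiching argument using radii slightly above and below $R$ where convergence is known. Hence every subsequence of $(v_j(R))_j$ has a further subsequence converging to $v_\infty(R)$, so by the elementary subsequence principle $v_j(R)\to v_\infty(R)$ for every $R>0$.

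Next I would run the same subsequence-principle argument at the level of intrinsic flat distance rather than volume. Fix $R>0$. Take any subsequence $x_{j_k}$; by hypothesis extract $x_{j_{k_\ell}}$ with $S(x_{j_{k_\ell}},r)\VFto S(x_\infty,r)$ for a.e.\ $r$. I claim this already forces $S(x_{j_{k_\ell}},R)\IFto S(x_\infty,R)$ for the specific $R$ (not just a.e.). This is where the monotone/ball-continuity structure enters: the spaces $S(x,r)$ are nested in $r$, and on a Riemannian manifold $r\mapsto S(x_\infty,r)$ is continuous in the intrinsic flat distance at every $R$ (balls of a complete Riemannian manifold vary continuously; the ``bad'' radii of Lemma~\ref{lem-AASorh} are an artifact of the \emph{sequence}, not of the fixed limit space). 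Using radii $r_-<R<r_+$ with $r_\pm$ chosen among the a.e.-good radii and arbitrarily close to $R$, together with the mass convergence $v_j(R)\to v_\infty(R)$ established above to control the mass of the annular pieces $S(x_{j_{k_\ell}},r_+)\ominus S(x_{j_{k_\ell}},R)$ and $S(x_{j_{k_\ell}},R)\ominus S(x_{j_{k_\ell}},r_-)$, one gets $d_{\mathcal F}(S(x_{j_{k_\ell}},R),S(x_\infty,R))\to 0$. Combined with $v_{j_{k_\ell}}(R)\to v_\infty(R)$, this gives $S(x_{j_{k_\ell}},R)\VFto S(x_\infty,R)$ at the fixed radius $R$ along this further subsequence. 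Since the original subsequence $x_{j_k}$ was arbitrary, every subsequence has a further subsequence with $S(x_{j_k},R)\VFto S(x_\infty,R)$, and the subsequence principle applied to the metric $d_{\mathcal F}$ (plus the already-proven $v_j(R)\to v_\infty(R)$) yields $S(x_j,R)\VFto S(x_\infty,R)$ for the fixed $R$. As $R$ was arbitrary, this holds for all $R>0$.

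The main obstacle is the claim in the second paragraph: upgrading ``a.e.\ $r$'' convergence to convergence at the \emph{fixed} radius $R$ along a single subsequence. The volume half of this is easy because volume is a monotone scalar, but the intrinsic flat distance half requires genuinely estimating $d_{\mathcal F}$ between a ball and a slightly larger or smaller concentric ball, uniformly along the subsequence. The right tool is a filling estimate: $S(x,r_+)-S(x,r_-)$ is (for the manifold currents) an integral current supported in the annulus, and its flat norm is controlled by its mass, which is $v_j(r_+)-v_j(r_-)$; choosing $r_\pm$ so that $v_\infty(r_+)-v_\infty(r_-)<\epsilon$ (possible since $v_\infty$ is monotone, hence has only countably many jumps, so such continuity radii are dense) and using $v_j(r_\pm)\to v_\infty(r_\pm)$ makes these annular masses uniformly small along the subsequence. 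One must also ensure the canonical-set/boundary pieces behave, i.e.\ that the restriction operation $M\rstr B(x,r)$ interacts well with flat convergence at the chosen good radii — but this is exactly the content built into Lemma~\ref{lem-AASorh} and the slicing theory it rests on, applied at the a.e.-good radii $r_\pm$ rather than at $R$ itself. Once the annular pieces are controlled, the triangle inequality for $d_{\mathcal F}$ closes the argument.
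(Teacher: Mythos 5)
Your proof is correct, but it takes a genuinely different route from the paper at the crucial step. Both arguments share the same skeleton: the subsequence principle for real sequences, the bound $d_{\mathcal F}\bigl(S(x_j,R),S(x_j,r_+)\bigr)\le \vol(B(x_j,r_+))-\vol(B(x_j,R))$ for nested ball restrictions, and a triangle inequality through a nearby radius $r_+>R$ that is ``good'' for the extracted subsequence, using continuity of $r\mapsto \vol(B(x_\infty,r))$ in the Riemannian limit. Where you differ is in how you control the annulus volume at the fixed, possibly bad, radius $R$: you first prove $\vol(B(x_j,R))\to\vol(B(x_\infty,R))$ for \emph{every} $R$ and the \emph{full} sequence, by sandwiching the monotone functions $r\mapsto\vol(B(x_j,r))$ between good radii $r_-<R<r_+$ and invoking the scalar subsequence principle; this makes the flat-distance estimate immediate. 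The paper instead argues by contradiction and obtains the needed lower bound $\liminf_j \vol(B(x_j,R))\ge \vol(B(x_\infty,R))$ via the Ambrosio--Kirchheim slicing theorem (the co-area formula in the Riemannian case), lower semicontinuity of the slice masses $\M(\langle M_j,\rho_j,r\rangle)$ under the flat convergence at a.e.\ $r$, Fatou's lemma, and the assumed volume convergence at the good radius $R'$. Your monotonicity argument is more elementary and bypasses the slicing machinery entirely; what the paper's version buys is that it isolates exactly where slice-mass lower semicontinuity can substitute for monotone sandwiching, which is the mechanism one would want when adapting the result beyond the smooth Riemannian setting (as their remark about more general spaces suggests), though both proofs ultimately use the Riemannian structure of the limit to know that metric spheres carry no volume, so that $\vol(B(x_\infty,r))$ is continuous in $r$. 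Two small remarks: your appeal to Lemma~\ref{lem-AASorh} at the end is unnecessary, since the hypothesis already hands you $\VFto$ convergence at almost every radius along the further subsequence; and only the outer radius $r_+$ is really needed in the flat-distance step once full volume convergence at $R$ is in hand, exactly as in the paper's use of the single radius $R'$.
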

\begin{rmrk}
In the following proof we note where the Riemannian assumption is used, so that the reader can see when the result applies to more general spaces.
\end{rmrk}
\begin{proof}
First we will prove that $S(x_{j}, R) \IFto S(x_\infty, R)$ for all $R>0$. 
To the contrary, suppose there exists a specific $R>0$ such that $S(x_j, R)$ fails to converge to $S(x_\infty, R)$.
 So there exist $\epsilon>0$ and a subsequence $x_{j_k}$ such that for all $k$,
\begin{equation}\label{indirect}
d_{\mathcal{F}}( S(x_{j_k}, R) , S(x_\infty, R))>\epsilon.
\end{equation}
By our assumption, there exists a subsequence $x_{{j_k}_\ell}$ such that for almost every $r>0$, 
$S(x_{{j_k}_\ell}, r)\VFto S(x_\infty, r)$. We select $R'>R$ 
close enough to $R$ so that 
\begin{align} \label{estimate1}
d_{\mathcal{F}}( S(x_\infty, R), S(x_\infty, R'))&\le
\M\left( \overline{B(x_\infty, R')\smallsetminus  B(x_\infty, R)}\right)\\
&=
 \vol (B(x_\infty, R'))- \vol (B(x_\infty, R))<\epsilon/4.\nonumber
\end{align}
 (This is clearly possible since the limit space is Riemannian.) 
Of course, we can also select $R'$ so that 
 $S(x_{{j_k}_\ell}, R') \VFto S(x_\infty, R')$. 
  In particular, for sufficiently large~$\ell$, we have 
\begin{equation}  \label{estimate2}
 d_{\mathcal{F}}( S(x_{{j_k}_\ell}, R'), S(x_\infty, R')) <\epsilon/4.
 \end{equation}
 By \eqref{indirect}, \eqref{estimate1},  \eqref{estimate2}, and the triangle inequality, we see that for sufficiently large~$\ell$, 
\begin{equation}\label{contradict}
  d_{\mathcal{F}}( S(x_{{j_k}_\ell}, R), S(x_{{j_k}_\ell}, R')) > \epsilon/2,
 \end{equation}
and this is the inequality that we will contradict.

For almost every $r>0$, $\partial S(x_{{j_k}_\ell}, r)\IFto \partial S(x_\infty, r)$, so we also have convergence of slices $\langle M_{{j_k}_\ell}, \rho_{{j_k}_\ell}, r\rangle \IFto  \langle M_{\infty}, \rho_{\infty}, r\rangle $, where $\rho_j$ denotes the distance function to the point $x_j$ in $M_j$, for $j\in\mathbb{N}\cup\{\infty\}$. 
By
 lower semicontinuity of mass under intrinsic flat convergence, 
\[
\M ( \langle M_{{j_k}_\ell}, \rho_{{j_k}_\ell}, r\rangle )
\le \liminf_{\ell\to\infty}
 \M ( \langle M_{\infty}, \rho_{\infty}, r\rangle ).
 \]
Applying the Ambrosio-Kirchheim slicing theorem for the case of a distance function on a Riemannian manifold (in which case it is simply the co-area formula),  and also Fatou's Lemma, 
\begin{align*}
\vol (B(x_\infty, R')) &= \int_0^{R'} \M ( \langle M_{\infty}, \rho_{\infty}, r\rangle )\,dr\\
&\le  \int_0^{R'} \liminf_{\ell\to\infty} \M ( \langle M_{{j_k}_\ell}, \rho_{{j_k}_\ell}, r\rangle ) \,dr\\
&\le  \liminf_{\ell\to\infty}\int_0^{R'}  \M ( \langle M_{{j_k}_\ell}, \rho_{{j_k}_\ell}, r\rangle ) \,dr\\ 
&=   \liminf_{\ell\to\infty}\vol(B(x_{{j_k}_\ell}, R'))\\
&= \vol (B(x_\infty, R')),
\end{align*}
where we use the assumption of volume convergence in the last line. This equality implies that we must actually have the equality 
\[ \M ( \langle M_{\infty}, \rho_{\infty}, r\rangle ) = \liminf_{\ell\to\infty} \M ( \langle M_{{j_k}_\ell}, \rho_{{j_k}_\ell}, r\rangle ) ,\]
for almost every $r< R'$. 

Recall that we do not have good convergence properties for $R$, but we can use the co-area formula and Fatou again, combined with the above equality to obtain:
\begin{align*}
&\limsup_{\ell\to\infty}\left[ \vol (B(x_{{j_k}_\ell}, R'))-\vol  (B(x_{{j_k}_\ell}, R))\right]\\
&\le \limsup_{\ell\to\infty} \vol (B(x_{{j_k}_\ell}, R') )  - \liminf_{\ell\to\infty} \int_0^R \M ( \langle M_{{j_k}_\ell}, \rho_{{j_k}_\ell}, r\rangle ) \,dr \\
&\le \vol (B(x_\infty, R') ) -\int_0^R  \liminf_{\ell\to\infty}\M ( \langle M_{{j_k}_\ell}, \rho_{{j_k}_\ell}, r\rangle ) \,dr \\
&=  \vol (B(x_\infty, R')  )-\int_0^R   \M ( \langle M_{\infty}, \rho_{\infty}, r\rangle ) \,dr \\
&= \vol (B(x_\infty, R'))- \vol (B(x_\infty, R))\\
&<\epsilon/4,
\end{align*}
by assumption~\eqref{estimate1}. Since 
\[ d_{\mathcal{F}}\left( S(x_{{j_k}_\ell}, R'), S(x_{{j_k}_\ell}, R)\right)\le \vol (B(x_{{j_k}_\ell}, R'))-\vol  (B(x_{{j_k}_\ell}, R)),\]
 this contradicts \eqref{contradict}.

Finally, we deal with the possibility that the convergence $S(x_{j}, R) \IFto S(x_\infty, R)$ holds but volume convergence does not. If volume convergence fails, there exist $\epsilon>0$ and a subsequence $x_{j_k}$ such that 
\[
|\vol( B(x_{j_k}, R))  -  \vol(B(x_\infty, R))|>\epsilon.
\]
From here we can use the  same argument as above to get a contradiction in exactly the same way.

\end{proof}


 \section{Application to~\cite{HLS}}\label{section:HLS}

We will briefly recall the main definitions of \cite{HLS}.

\begin{defn}\label{def:Gr}
For $n\ge3$, $r_0, \gamma, D>0$, and $\alpha<0$, define $\Gr$ to be the space of all smooth complete Riemannian manifolds $(M^n,g)$ with nonnegative scalar curvature, possibly with boundary, that admit a smooth Riemannian isometric embedding $\Psi:M\too \E^{n+1}$ such that for some open $U\subset B(r_0/2)\subset \E^n$, the image $\Psi(M)$ is the graph of a function $f\in C^\infty(\E^{n}\smallsetminus \overline{U}) \cap C^0(\E^{n}\smallsetminus {U})$:
\[
\Psi(M)=\left\{(x,f(x)): \,\, x\in \E^n \smallsetminus U\right\}
\]
with empty or minimal boundary:
\[ 
\textrm{either }
\partial M=\emptyset \textrm{ and } U=\emptyset, 
\]
\[ 
\textrm{ or
$f$ is constant on each component of $\partial U$ and }
\lim_{x\to\partial U } |D f(x)|=\infty,
\]
and for almost every $h$, the level set 
\[ 
f^{-1}(h)\subset \E^n
\textrm{ is strictly mean-convex and outward-minimizing,}
\]
where strictly mean-convex means that the mean curvature is strictly positive, and outward-minimizing means that any region of $\E^n$ that contains the region enclosed by $f^{-1}(h)$ must have perimeter at least as large as $\mathcal{H}^{n-1}( f^{-1}(h))$.

In addition we require uniform asymptotic flatness conditions:
\[ 
 |D f| \le \gamma
\textrm{ for }|x|\ge r_0/2  \textrm{ and }
\lim_{x\to\infty} |D f| =0.
\]
If $n\ge 5$, we require that 
$f(x)$ approaches a constant as $x\to\infty$. 
If $n=3$ or $4$, we require that the graph is asymptotically Schwarzschild:\footnote{See~\cite{Huang-Lee-graph} for the definition of the function $S_m$.}
\[
\exists \Lambda, m\in \R \textrm{ such that }
 \left|f(x) - (\Lambda+ S_m(|x|)) \right| \le \gamma |x|^\alpha 
 \textrm{ for } |x|\ge r_0.
\]

For $r\ge r_0$, we define
\[
\Omega(r):=\Psi^{-1}(B(r)\times\rr)\,\,\,
\textrm{ and } \,\,\, \Sigma(r):=\partial\Omega(r)\smallsetminus \partial M,
\]
so that $\Omega(r)$ represents the part of $M$ whose $\Psi$-image lies in the cylinder $B(r)\times\rr$, and $\Sigma(r)$ represents the ``outer'' component of $\partial\Omega(r)$, which is the part of $M$ whose $\Psi$-image lies in the cylindrical shell $\partial B(r)\times\rr$.

Finally, we require a ``bounded depth'' assumption:
 \[ 
\sup\left\{d_M(p,\Sigma(r_0)): p\in \Omega(r_0)\right\}
 \le D.
 \]
\end{defn}

For $n$-dimensional integral current spaces, we  say that $M_j$ converges to $M_\infty$ in the \emph{intrinsic flat volume} sense, or $M_j\VFto M_\infty$, if we have intrinsic flat convergence, $M_j\IFto M_\infty$, as well as $\M (M_j)\to \M (M_\infty)$, where $\M$ denotes the mass of an integral current space (not to be confused with the unrelated concept of ADM mass). Recall that $\M$ is the same thing as $\vol$ for Riemannian spaces.

An equivalent statement of~\cite[Theorem 1.3]{HLS} is the following:

\begin{thm}\label{thm:HLSmain}   
Let $n\ge3$, $r_0, \gamma, D>0$, $\alpha<0$, and $r\ge r_0$. Let $M_j\in\Gr$ and adopt the notation in Definition~\ref{def:Gr} with a $j$-subscript. If the ADM masses of $M_j$ converge to zero, then $\Omega_j(r)$ converges  to the Euclidean ball $B(r)$ in the  intrinsic flat volume sense. That is,
\[
 \Omega_j(r)  \VFto  B(r). 
 \]
  \end{thm}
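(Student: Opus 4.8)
The plan is to combine the flat-convergence stability estimates from the graphical setting (the results of~\cite{Huang-Lee-graph} underlying~\cite{HLS}) with the new subsequence criterion in Theorem~\ref{theorem:subsequences}. The Federer--Fleming flat stability result of~\cite{Huang-Lee-graph} says that when the ADM masses go to zero, the regions $\Omega_j(r)$, viewed as integral currents in $\E^n$ via the projection $\Psi_j$ followed by vertical projection, converge in the flat norm to the current $\llbracket B(r)\rrbracket$. The first issue is to upgrade this ambient flat convergence to intrinsic flat convergence $\Omega_j(r)\IFto B(r)$; this is essentially the content of~\cite[Theorem 1.4]{HLS} (which Section~\ref{sec:thm1.4} fleshes out), so I would invoke that once its proof is in place, or more directly use the fact that the identity-type maps from $\Omega_j(r)$ with its intrinsic metric into $\E^n$ are distance-non-increasing with controlled behavior, so ambient flat convergence of the pushforwards yields intrinsic flat convergence. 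The volume statement $\vol(\Omega_j(r))\to\vol(B(r))=\omega_n r^n$ should come from the area/co-area estimates in the graphical setting: the graph area of $\Omega_j(r)$ exceeds the flat area $\omega_n r^n$ by an amount controlled by $\int |Df_j|^2$-type quantities, which are in turn controlled by the ADM mass via Lam's formula, so small mass forces the excess volume to zero.

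Next I would set up Theorem~\ref{theorem:subsequences}. Take $x_j$ to be the center point of $\Omega_j(r)$, i.e.\ the preimage under $\Psi_j$ of the point lying above the origin of $\E^n$, and $x_\infty=0\in B(r)$. The goal becomes: for almost every $R>0$, $S(x_j,R)\VFto S(x_\infty,R)$, i.e.\ the intrinsic metric balls $B^{M_j}(x_j,R)$ inside $\Omega_j(r)$ converge in intrinsic flat volume sense to the Euclidean ball $B(R)$ (for $R$ up to roughly $r$, after which the balls saturate to all of $\Omega_j(r)$). By Theorem~\ref{theorem:subsequences}, it suffices to show that every subsequence admits a further subsequence along which this holds for a.e.\ $R$. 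For a given subsequence, one extracts (using the flat stability plus uniform mass/diameter bounds and Wenger compactness, or directly Lemma~\ref{lem-AASorh} and Theorem~\ref{thm-AASorh} applied to the already-known convergence $\Omega_j(r)\IFto B(r)$ with the points $x_j$) a further subsequence along which $(\Omega_j(r),x_j)\IFto(B(r),x'_\infty)$ and $S(x_j,R)\IFto S(x'_\infty,R)$ for a.e.\ $R$. The remaining task is to identify $x'_\infty=0$: this is where the depth bound $D$ and the asymptotic control enter — the distance from $x_j$ to the outer boundary $\Sigma_j(r)$ is comparable to $r$ in the limit (indeed one can show $d_j(x_j,\Sigma_j(r))\to r$ using that the graph metric on $\Omega_j(r)$ is $C^0$-close to the flat metric on $B(r)$ when $|Df_j|$ is small in the relevant region, or at least $\ge r$ by length comparison), whereas any point at distance $<r$ from $0$ in $B(r)$ that is not $0$ would be closer to $\partial B(r)$, forcing $x'_\infty$ to the center by a symmetry/extremality argument; alternatively invoke Theorem~\ref{theorem:GH}\eqref{item:bound} with $\Sigma_j=\Sigma_j(r)$ to pin down $d_\infty(x'_\infty,\partial B(r))$. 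Once $x'_\infty=0$, the balls $S(x'_\infty,R)$ are genuinely the Euclidean balls $B(R)$, and volume convergence $\vol(B^{M_j}(x_j,R))\to\vol(B(R))$ follows for a.e.\ $R$ by the co-area argument inside the proof of Theorem~\ref{theorem:subsequences} together with the global volume convergence $\vol(\Omega_j(r))\to\vol(B(r))$ and monotonicity in $R$.

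With the hypothesis of Theorem~\ref{theorem:subsequences} verified, we conclude $S(x_j,R)\VFto S(x_\infty,R)$ for \emph{all} $R>0$. Taking $R$ large (say $R>2r+D$, so that $B^{M_j}(x_j,R)=\Omega_j(r)$ for all large $j$ by the depth bound and the fact that the graph metric diameter of $\Omega_j(r)$ is uniformly bounded — here the depth assumption $\sup\{d_{M_j}(p,\Sigma_j(r_0))\}\le D$ plus $r\ge r_0$ controls everything), we get $\Omega_j(r)=S(x_j,R)\VFto S(x_\infty,R)=B(R)\cap B(r)=B(r)$, which is exactly the claim $\Omega_j(r)\VFto B(r)$.

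\textbf{Main obstacle.} The step I expect to be most delicate is identifying the limit point $x'_\infty$ as the center $0$ and, relatedly, establishing the a.e.-$R$ intrinsic flat convergence of the \emph{intrinsic} metric balls $B^{M_j}(x_j,R)$ to Euclidean balls — this requires genuine control of the intrinsic distance function $\rho_j=d_{M_j}(x_j,\cdot)$ on $\Omega_j(r)$, not merely of the ambient flat distance, and in the regime near the minimal boundary (where $|Df_j|\to\infty$) the graph metric is badly behaved, so one must argue that this bad region is negligible in the flat limit (this is precisely the $\partial_2 M_j\IFto\mathbf{0}$ phenomenon of Theorem~\ref{theorem:GH}) and does not pollute the metric balls of radius $R\lesssim r$. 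Controlling this interplay between the intrinsic metric, the ambient flat convergence, and the minimal-boundary degeneration — and doing so uniformly over a.e.\ $R$ so that the co-area/Fatou machinery of Theorem~\ref{theorem:subsequences} applies — is the crux.
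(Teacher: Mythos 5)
Your proposal contains a circularity at its core and misidentifies where the real work is. You write that the first issue is to ``upgrade this ambient flat convergence to intrinsic flat convergence $\Omega_j(r)\IFto B(r)$,'' and propose to do so either by invoking \cite[Theorem 1.4]{HLS} or by a quick argument from the distance-non-increasing projection. Both fail. Invoking Theorem~1.4 of~\cite{HLS} is circular: that theorem (Theorem~\ref{thm:HLSmain2} here) is derived as a \emph{consequence} of Theorem~\ref{thm:HLSmain}, so it cannot be used to prove it. And the ``distance-non-increasing map plus ambient flat convergence gives intrinsic flat convergence'' reasoning is precisely the gap that was identified as an error in the original proof in~\cite{HLS} (the paper explicitly cites the corrigendum and~\cite{CPKP} on this point); ambient flat convergence of pushforward currents does not by itself control the intrinsic distances well enough to conclude intrinsic flat convergence. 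The entire reason this paper supplies a new proof of Theorem~\ref{thm:HLSmain} is to avoid that inference. Once you grant yourself $\Omega_j(r)\IFto B(r)$ and $\vol(\Omega_j(r))\to\vol(B(r))$, the theorem is already proved, so the elaborate machinery of Theorem~\ref{theorem:subsequences}, the center point $x_j$, and the identification $x'_\infty=0$ are all superfluous for this statement --- they belong to the proof of the pointed version, Theorem~\ref{thm:HLSmain2}, not to Theorem~\ref{thm:HLSmain}.

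The paper's actual route is different in kind. It reduces to intrinsic flat convergence (volume convergence being already known from~\cite[Corollary~4.4]{HLS}) and then applies the Allen--Perales criterion (Theorem~\ref{thm:AP}): one needs a $C^1$ diffeomorphism $\Phi_j$ from the \emph{fixed} limit space $B(r)$ onto the varying spaces that is almost distance non-\emph{decreasing}, together with a uniform diameter bound, a uniform boundary-volume bound, volume convergence, and convexity of the interior of the limit. In the boundaryless case the graphing map $\Phi_j(x)=\Psi_j^{-1}(x,f_j(x))$ works immediately. The genuinely new content, and the bulk of the paper's proof, is the boundary case: $\Omega_j(r)$ is then not diffeomorphic to $B(r)$, so the authors replace it by an extended space $\tilde\Omega_j$ (appending a long thin cylinder to $\partial M_j$ and smoothly capping it), show $d_{\mathcal F}(\Omega_j(r),\tilde\Omega_j)\to0$ using the Penrose and isoperimetric inequalities, and construct a careful diffeomorphism $\tilde\Phi_j:B(r)\to\tilde\Omega_j$ by reparametrizing the normal direction to $\partial U_j$ so that the degeneration $|Df_j|\to\infty$ is exactly what allows the radial stretching to compensate for the reparametrization. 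You flagged the $|Df_j|\to\infty$ degeneration as the likely crux, which is a correct diagnosis, but your plan has no mechanism to address it; the paper's extension-and-reparametrization construction is that mechanism, and nothing in your outline substitutes for it.
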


A.~Cabrera Pacheco, C.~Ketterer, and the third author discovered an error in the proof of this theorem in~\cite{HLS} while researching stability of tori with nonnegative scalar curvature \cite{CPKP}.  See~\cite{HLS-corrigendum}.
 B.~Allen and the third author were able to provide an alternative proof of Theorem~\ref{thm:HLSmain}   under the added assumption that $M$ has no boundary~\cite[Section 7]{AP20}. The alternative proof  is an application of~\cite[Theorem 4.2]{AP20} in conjunction with estimates from~\cite{HLS}. In Section~\ref{sec:main} we will extend that argument to obtain a proof of Theorem~\ref{thm:HLSmain}  in full generality.
 
The following theorem is a consequence of Theorem~\ref{thm:HLSmain}.

  \begin{thm}\label{thm:HLSmain2}   
Let $n\ge3$, $r_0, \gamma, D>0$, and $\alpha<0$. 
Let $M_j\in\Gr$ and adopt the notation in Definition~\ref{def:Gr} with a $j$-subscript. If the ADM masses of $M_j$ converge to zero, then for any sequence of points $p_{j}  \in  \Sigma_{j}(r_0)$ and any $R>0$, 
the geodesic ball $B(p_{j}, R)\subset M_j$ converges to the Euclidean ball $B(R)$ in the  intrinsic flat volume sense. That is, 
\[
 B(p_{j}, R)   \VFto  B(R). 
 \]
 \end{thm}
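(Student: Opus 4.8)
The plan is to deduce Theorem~\ref{thm:HLSmain2} from Theorem~\ref{thm:HLSmain} together with the point-convergence machinery of Section~\ref{section:point-convergence}, in particular Theorem~\ref{theorem:point-convergence} and Theorem~\ref{theorem:subsequences}. The starting point is that Theorem~\ref{thm:HLSmain} gives $\Omega_j(r_0)\VFto B(r_0)$, and moreover, by the ``bounded depth'' hypothesis in Definition~\ref{def:Gr}, every point of $\Omega_j(r_0)$ lies within distance $D$ of $\Sigma_j(r_0)$; in particular, for $p_j\in\Sigma_j(r_0)$ the geodesic ball $B(p_j,R)$ inside $M_j$ agrees with the geodesic ball inside $\Omega_j(r_0)$ once we enlarge $r_0$ slightly — more precisely, I would first observe that for any fixed $R$ there is an $r=r(R)\ge r_0$, depending only on the allowed parameters (using the asymptotic-flatness bound $|Df|\le\gamma$ to control how far out the graph can wander), such that $B(p_j,R)\subset\Omega_j(r)$, and then apply Theorem~\ref{thm:HLSmain} at radius $r$ to get $\Omega_j(r)\VFto B(r)$.

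Next I would set up the hypotheses of Theorem~\ref{theorem:point-convergence}. Take $M_j=\Omega_j(r)$, $V_j=\Omega_j(r_0)$ (or a slightly larger sub-ball), and $x_j=p_j\in\Sigma_j(r_0)$. Assumption~\eqref{item:metric-ball} is exactly the statement, just noted, that $B(p_j,\delta)\subset V_j$ for a uniform $\delta$ — here one should be a little careful because $p_j$ lies on the ``inner'' part of $\Omega_j(r_0)$ near $\Sigma_j(r_0)$, but since $\Sigma_j(r_0)$ is an interior hypersurface of $\Omega_j(r)$ and the graph has uniformly bounded gradient out to radius $r$, a uniform $\delta$ exists. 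For assumption~\eqref{item:subset-converge} I need a candidate limit point: since $\Omega_j(r_0)\VFto B(r_0)$ and $p_j$ lies on its boundary $\Sigma_j(r_0)$, which converges (after the work of~\cite{Huang-Lee-graph,HLS}, and as packaged in Theorem~\ref{theorem:GH}\eqref{item:H-convergence}) to the Euclidean sphere $\partial B(r_0)$ in the Gromov-Hausdorff, hence common-embedding, sense, I can extract a subsequence along which $p_j$ converges to some point $\bar p\in\partial B(r_0)$; by symmetry of the round sphere this limit is ``the same up to isometry'' for any choice, which is why the conclusion can be stated cleanly as convergence to $B(R)$. Feeding this into Theorem~\ref{theorem:point-convergence} (with $Z$ the embedding space realizing $\Omega_j(r)\VFto B(r)$) yields a subsequence and a point $p'_\infty\in B(r)$ with $(\Omega_{j_k}(r),p_{j_k})\IFZto(B(r),p'_\infty)$, and then Lemma~\ref{lem-AASorh} gives a further subsequence along which $S(p_{j_k},R)\IFto S(p'_\infty,R)$ for a.e.\ $R$. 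The point $p'_\infty$ must lie on $\partial B(r_0)\subset B(r)$ by part~\eqref{item:bound} of Theorem~\ref{theorem:GH} applied to the inner boundary, so $S(p'_\infty,R)$ is a half-ball-type region of Euclidean space that is independent of the choice of $p'_\infty$ up to isometry — and since we need $B(p_j,R)$ to converge to the \emph{full} Euclidean ball $B(R)$, one must here invoke that $B(p_j,R)$ sits inside the region where $\Omega_j$ is genuinely the Euclidean graph and the gradient is tending to zero, so that $S(p_\infty,R)$ is in fact isometric to $B(R)$; I'd extract this from the asymptotic flatness and the fact that the limit of $\Omega_j(r)$ is exactly the flat ball $B(r)$.

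Finally, to upgrade ``subsequence'' to ``full sequence,'' I invoke Theorem~\ref{theorem:subsequences}: the objects $\Omega_j(r)$ are Riemannian manifolds, so if every subsequence of $p_j$ admits a further subsequence along which $S(p_{j_k},R)\VFto S(p_\infty,R)=B(R)$ for a.e.\ $R$ (which is what the previous paragraph establishes, since the limit $B(R)$ does not depend on which subsequence we took), then $S(p_j,R)\VFto B(R)$ for \emph{all} $R>0$. Volume convergence, i.e.\ that $\vol(B(p_j,R))\to\vol(B(R))$, comes along for free inside Theorem~\ref{theorem:subsequences} provided we can supply it on the subsequences, and that in turn follows from the intrinsic flat \emph{volume} convergence $\Omega_j(r)\VFto B(r)$ by the slicing/co-area argument already carried out in the proof of Theorem~\ref{theorem:subsequences}.

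\textbf{Main obstacle.} The delicate step is the identification of the limit: showing that for \emph{every} subsequential limit point $p'_\infty$ the region $S(p'_\infty,R)$ is isometric to the full Euclidean ball $B(R)$ rather than some half-ball or other boundary region. This requires combining (i) that $p_j$ lies on the inner sphere $\Sigma_j(r_0)$ whose image in $\E^{n+1}$ is a minimal hypersurface with diverging gradient, with (ii) the uniform asymptotic flatness forcing the geometry near $p_j$, at scale $R$, to be increasingly Euclidean as the ADM mass tends to zero, so that the ``horizon'' $\Sigma_j(r_0)$ does not obstruct the geodesic ball $B(p_j,R)$ — i.e.\ the $R$-ball around $p_j$ pushes \emph{outward} into the asymptotically flat region rather than being cut off. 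Making (ii) precise, and checking it is uniform in $j$, is where the real content lies; everything else is an assembly of the stated lemmas.
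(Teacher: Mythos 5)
Your overall architecture matches the paper's (Theorem~\ref{thm:HLSmain} on a large coordinate ball, then Theorem~\ref{theorem:point-convergence}, Lemma~\ref{lem-AASorh}, and finally Theorem~\ref{theorem:subsequences}), but there is a genuine gap at the crucial step where you verify the hypotheses of Theorem~\ref{theorem:point-convergence}. With $V_j=\Omega_j(r_0)$, hypothesis~\eqref{item:metric-ball} fails outright: $p_j$ lies \emph{on} $\Sigma_j(r_0)=\partial\Omega_j(r_0)\smallsetminus\partial M_j$, so no metric ball $B(p_j,\delta)\subset X_j=\Omega_j(r)$ is contained in $\Omega_j(r_0)$, no matter how small $\delta$ is. If instead you take the ``slightly larger'' region $V_j=\Omega_j(r_0+1)$, then~\eqref{item:metric-ball} holds, but your verification of hypothesis~\eqref{item:subset-converge} breaks: you obtain the candidate limit point by applying Theorem~\ref{theorem:GH}\eqref{item:H-convergence} to the convergence $\Sigma_j(r_0)\GHto\partial B(r_0)$, and that theorem only places the \emph{boundary} of a converging region in the same embedding space as the region; $\Sigma_j(r_0)$ is an \emph{interior} hypersurface of $\Omega_j(r_0+1)$ (and of $\Omega_j(r)$), so knowing $\Sigma_j(r_0)\GHto\partial B(r_0)$ does not by itself give intrinsic flat point convergence of $p_j$ relative to $V_j$. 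This compatibility is exactly the delicate issue the paper's Section~\ref{section:point-convergence} is designed to manage, and the paper's fix is an extra ingredient you do not have: Lemma~\ref{lemma-annuli}, which proves that the full-dimensional coordinate annuli $\Omega_j(r_0-1,r_0+1)$ converge to $A(r_0-1,r_0+1)$ in \emph{both} the Gromov--Hausdorff and intrinsic flat senses (via the uniform bilipschitz bound on $\pi\circ\Psi_j$ in the exterior region and the compactness theorem A.1 of~\cite{HLS}). With that, Proposition~\ref{prop:GHIF} gives a common embedding $W$, compactness of the limit annulus yields a subsequential limit point, and both hypotheses of Theorem~\ref{theorem:point-convergence} hold simultaneously with $V_j=\Omega_j(r_0-1,r_0+1)$, $\delta<1$.

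Two further remarks. First, what you single out as the ``main obstacle'' --- ruling out a half-ball or horizon-truncated limit --- is not where the difficulty lies: once $(\Omega_j(r_0+\bar R),p_j)\IFZto(B(r_0+\bar R),p'_\infty)$ is established, Theorem~\ref{theorem:GH}\eqref{item:bound} applied with $\Sigma_j(r_0+\bar R)$ as $\partial_1 M_j$ (and $\partial M_j$, which vanishes in the limit, as $\partial_2 M_j$) gives $d_{\E}(p'_\infty,\partial B(r_0+\bar R))\ge\bar R-1$; since the limit space is already known to be the flat ball, $S(p'_\infty,R)$ is isometric to $B(R)$ for every $R<\bar R-1$, with no per-scale asymptotic-flatness analysis needed. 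Second, volume convergence of the balls does not come ``for free'' inside Theorem~\ref{theorem:subsequences}: that theorem takes $\VFto$ convergence along sub-subsequences as a hypothesis, so you must first prove $\vol(B(p_j,R))\to\vol(B(R))$ along the subsequence --- the paper does this by applying the complement version of Lemma~\ref{lem-AASorh} together with the volume argument of~\cite[Theorem~1.4]{HLS} --- and you must also diagonalize over $\bar R\to\infty$ to get the a.e.\ $R>0$ hypothesis before invoking Theorem~\ref{theorem:subsequences}; your per-$R$ choice of $r(R)$ only yields the statement for a.e.\ $R$ in a bounded range.
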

 
 A slightly weaker version of this theorem appears in~\cite{HLS} as Theorem 1.4. In the course of researching how to use~\cite{CC} to prove an asymptotically hyperbolic version of Theorem~\ref{thm:HLSmain2}, Cabrera Pacheco and the third author identified some parts of the proofs of~\cite[Theorem 1.4 and Lemma 5.1]{HLS} that require further justification. 
In Section~\ref{sec:thm1.4}, we will explain in detail how to apply the results from Section~\ref{section:point-convergence} to prove Theorem~\ref{thm:HLSmain2}, and to be thorough, we also discuss how to apply them to the proof of~\cite[Lemma 5.1]{HLS}  in Section~\ref{sec:lemma5.1}, thereby legitimizing all of the results of~\cite{HLS}.

\subsection{Proof of Theorem~\ref{thm:HLSmain} }\label{sec:main}

Throughout the rest of the paper, we will often abusively refer to regions of Riemannian manifolds as sets, metric spaces, and integral current spaces, depending on what is convenient, as long as there is minimal chance of confusion.

Our task in this section is to adapt the proof from~\cite[Section 7]{AP20} to the case of nontrivial boundary. We will find it convenient to use the following corollary of~\cite[Theorem 4.2]{AP20}, which easily follows from a simple scaling argument and the application of a diffeomorphism:

\begin{thm}[Allen-Perales]\label{thm:AP}
Let $(\Omega_\infty, g_\infty)$ be a smooth compact Riemannian manifold, possibly with boundary, and let $\Omega_j$
be diffeomorphic to $\Omega_\infty$ via $C^1$ diffeomorphisms
\be
\Phi_j: \Omega_\infty \rightarrow \Omega_j,
\ee
such that $\Omega_j$ is equipped with  a continuous metric $g_j$. Assume that this sequence has the following properties:
\be \label{first}
 g_\infty (u, u) < \left(1+ \tfrac{1}{j}\right) g_j (d\Phi_j (u) ,d\Phi_j (u)),
 \ee
for all tangent vectors $u$, 
\be \label{second}
\diam(\Omega_j) \le  L,
\ee
\be \label{third}
\vol (\Omega_j) \to \vol (\Omega_\infty),
\ee
\be \label{fourth}
\vol(\partial \Omega_j) \le A,
\ee
for some constants $L$ and $A$. Further assume that the interior of $(\Omega_\infty, g_\infty)$ is convex. Then $(\Omega_j, g_j)$ converges to $(\Omega_\infty, g_\infty)$ in the intrinsic flat sense.
\end{thm}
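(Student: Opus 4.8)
The plan is to deduce Theorem~\ref{thm:AP} from \cite[Theorem~4.2]{AP20} by two elementary reductions: first, transporting everything onto the fixed reference manifold $\Omega_\infty$ by means of the diffeomorphisms $\Phi_j$, and second, a rescaling of the metrics to match the normalization under which \cite[Theorem~4.2]{AP20} is stated. The substantive content will be entirely inside that cited theorem; what remains here is bookkeeping.

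For the first reduction I would set $\tilde g_j := \Phi_j^{*} g_j$, a continuous Riemannian metric on the fixed compact manifold $\Omega_\infty$. Since $\Omega_\infty$ is compact and the metrics are continuous, $\Phi_j$ is bi-Lipschitz, hence a Riemannian—and therefore metric—isometry from $(\Omega_\infty,\tilde g_j)$ onto $(\Omega_j,g_j)$ which (with orientations chosen compatibly) carries the integration current of the source to that of the target. Thus $(\Omega_\infty,\tilde g_j)$ and $(\Omega_j,g_j)$ represent the same $n$-dimensional integral current space, so it suffices to prove $(\Omega_\infty,\tilde g_j)\IFto(\Omega_\infty,g_\infty)$. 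Each hypothesis transports: \eqref{first} becomes the pointwise inequality of quadratic forms $g_\infty\le(1+\tfrac1j)\tilde g_j$ on $\Omega_\infty$; \eqref{second}, \eqref{third}, \eqref{fourth} become $\diam(\Omega_\infty,\tilde g_j)\le L$, $\vol(\Omega_\infty,\tilde g_j)\to\vol(\Omega_\infty,g_\infty)$, and $\vol(\partial\Omega_\infty,\tilde g_j)\le A$; and ``convex interior'' is a property of $(\Omega_\infty,g_\infty)$ alone, untouched.

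For the second reduction I would fix a constant $\lambda>0$ and replace $\tilde g_j$ and $g_\infty$ by $\lambda^{2}\tilde g_j$ and $\lambda^{2}g_\infty$. Distances then scale by $\lambda$, $n$-volumes by $\lambda^{n}$, boundary $(n-1)$-volumes by $\lambda^{n-1}$, while the one-sided bound and the convexity of the interior are scale-invariant; and since the intrinsic flat distance between two such rescaled $n$-dimensional integral current spaces changes only by a fixed factor (lying between $\lambda^{n}$ and $\lambda^{n+1}$), the relation $\IFto$ is unaffected in both directions. Hence, choosing $\lambda$ to meet whatever size normalization is built into \cite[Theorem~4.2]{AP20}, that theorem applies to $(\Omega_\infty,\lambda^2\tilde g_j)$ and $(\Omega_\infty,\lambda^2 g_\infty)$, and rescaling back gives $\tilde g_j\IFto g_\infty$, which by the first reduction is the assertion. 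If instead \cite[Theorem~4.2]{AP20} is stated only with the sharp bound $g_\infty\le g_j$, the same conclusion follows by applying it to the sequence $(1+\tfrac1j)\tilde g_j$—whose diameters, volumes, and boundary areas differ from those of $\tilde g_j$ by factors tending to $1$—and then invoking the standard estimate bounding $d_{\mathcal F}$ between two $(1+\tfrac1j)$-bi-Lipschitz integral current spaces of uniformly bounded mass and boundary mass.

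The hard part will not be any analysis—there is essentially none beyond \cite[Theorem~4.2]{AP20}—but rather carefully verifying the two invariances: that ``being an $n$-dimensional integral current space'' and the value of $d_{\mathcal F}$ are preserved under the $C^1$-diffeomorphism pull-back (which reduces to $\Phi_j$ being bi-Lipschitz and to the integration current pushing forward with its mass), and that the precise size normalization and the precise meaning of ``convex interior'' in \cite[Theorem~4.2]{AP20} align with what the rescaling produces. Once those are pinned down, the theorem follows immediately.
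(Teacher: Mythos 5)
Your proposal is correct and follows essentially the same approach the paper indicates: the paper gives no detailed proof, stating only that Theorem~\ref{thm:AP} ``easily follows from a simple scaling argument and the application of a diffeomorphism'' of~\cite[Theorem 4.2]{AP20}, and your two reductions (pulling back by $\Phi_j$ to work on the fixed manifold $\Omega_\infty$, then rescaling either to absorb the $(1+\tfrac1j)$ factor or to match the normalization in \cite{AP20}) are precisely those two steps spelled out.
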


\begin{proof}[Proof of Theorem~\ref{thm:HLSmain} ]

Let $M_j$ be a sequence in $\Gr$ whose ADM masses are approaching zero, and let $r\ge r_0$. Adopting the notation in Definition~\ref{def:Gr} with a $j$-subscript, $M_j$ is isometric (via the isometry $\Psi_j$) to the graph of some function $f_j: \mathbb{E}^n\smallsetminus U_j \rightarrow \mathbb{R}$, and $f_j$ is constant on (each component of) $\partial U_j$ and $|Df_j|\to\infty$ at $\partial U_j$.   Recall that $\Omega_j(r)=\Psi_j^{-1}(B(r)\times\mathbb{R})$ is  the subset of $M_j$ corresponding to the part of the graph of $f_j$ lying within the cylinder of radius $r$.

By~\cite[Corollary 4.4]{HLS}, we know that $\vol(\Omega_j(r))\to \vol(B(r))$, so in order to prove the result, it suffices to show that  $\Omega_j(r)$ converges to the Euclidean ball $(B(r), g_{\E})$ in the intrinsic flat sense.

We paraphrase the argument from~\cite{AP20} in the no boundary case as follows: When $M_j$ has no boundary, we can define the diffeomorphism $\Phi_j: B(r)\rightarrow \Omega_j(r) $ to be the ``graphing map''
\be
\Phi_j(x)=\Psi_j^{-1}(x, f_j(x)).
\ee
It is also the inverse of the map $\pi\circ\Psi_j$, where $\pi$ is the projection map to $\E^n$. We claim that this $\Phi_j$ satisfies the hypotheses of Theorem~\ref{thm:AP}. We already mentioned that~\eqref{third} holds, and we know that bounds $L$ and $A$ as in~\eqref{second} and~\eqref{fourth} exist from the proof of~\cite[Theorem 3.1]{HLS}. Inequality~\eqref{first} holds because a graphing map is distance nondecreasing, and finally, the interior of a Euclidean ball is obviously convex. Hence we can apply Theorem~\ref{thm:AP}, with $(\Omega_\infty, g_\infty)=(B(r), g_{\E})$, to conclude that $\Omega_j(r)$ converges to $B(r)$ in the intrinsic flat sense.

We will now generalize this argument to the case of nontrivial boundary. In this case, $\Omega_j(r)$ is not diffeomorphic to $B(r)$, so we cannot apply Theorem~\ref{thm:AP} directly to the sequence $\Omega_j(r)$. Instead we will replace $\Omega_j(r)$ by a new sequence $\tilde\Omega_j$ obtained by ``filling in'' the boundary. 

 Let $L$ be the uniform diameter bound for $\Omega_j(r)$ for all $j$ mentioned above. The space $\tilde\Omega_j$ is obtained from $\Omega_j(r)$ by appending a cylinder  $\partial M_j\times(-L, 0)\cong \partial U_j\times(-L, 0)$  to $\partial M_j\subset \Omega_j(r)$  and then smoothly  ``capping'' the other end of the cylinder. The cap may be regarded as being isometric to the graph of a function on $U_j$, which we will also refer to as $f_j$ for simplicity, where $f_j$ is constant on $\partial U_j$ and $|Df_j|\to\infty$ as we approach $\partial U_j$ from the inside.  (Further details of the capping turn out to be inessential.) See Figure~\ref{figure:extension}.
\begin{figure}[h]    \label{figure:extension}
   \includegraphics[width=0.7\textwidth]{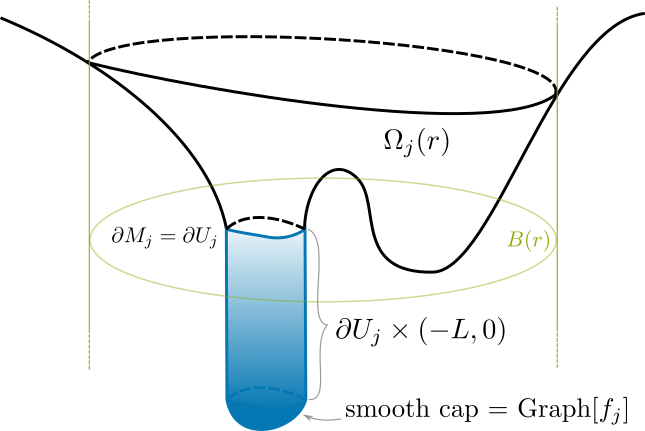}
   \caption{The space $\tilde\Omega_j$ is obtained from $\Omega_j(r)$ by appending a Riemannian cylinder  $\partial M_j\times(-L, 0)\cong \partial U_j\times(-L, 0)$ and a smooth graphical cap.}
\end{figure}

Since the ADM mass of $M_j$ approaches zero, the Penrose inequality~\cite{Lam-graph} implies that $\vol(\partial U_j)=\vol(\partial M_j)\to 0$ also, and then by the isoperimetric inequality, $\vol(U_j)$ approaches zero as well. Since $L$ is fixed, we can certainly choose the cap in such a way that $\tilde\Omega_j \smallsetminus \Omega_j(r)$ has volume approaching zero as $j\to\infty$. Since the cylinder we added has length $L$, it is clear that $\Omega_j(r)$ embeds into $\tilde\Omega_j$ metric isometrically. Therefore it is simple to see that 
\[ d_{\mathcal{F}}(\Omega_j(r), \tilde\Omega_j) \to 0,\]
and hence it only remains to prove that $\tilde\Omega_j$ converges to the Euclidean ball $B(r)$ in the intrinsic flat sense.

The new spaces $\tilde\Omega_j$ are diffeomorphic to $B(r)$, but we must choose the diffeomorphism carefully. The graphing map $\Phi_j(x)= \Psi_j^{-1}(x, f_j(x))$ defines a nice diffeomorphism away from $\partial U_j$, but it needs to be altered in a neighborhood of $\partial U_j$ to obtain a new diffeomorphism $\tilde{\Phi}_j:B(r)\rightarrow \tilde\Omega_j$. The main task is to show that the property~\eqref{first} holds for $\tilde{\Phi}_j$. Intuitively, this is not hard to do:  We just need to ``stretch'' in the directions orthogonal to $\partial U_j$. The stretching can only help $\tilde{\Phi}_j$ to be distance increasing, but the slight change to the tangential directions will introduce a small error term. We describe the details in the following. 

For $\epsilon>0$ sufficiently small, consider a tubular neighborhood of $\partial U_j$ in $\mathbb{E}^n$ diffeomorphic to $\partial U_j\times (-\epsilon, \epsilon)$ via the exponential map $(\theta, \rho) \mapsto (\theta+\rho\nu)$, where $\theta\in \partial U_j$, $\rho\in (-\epsilon, \epsilon)$, and $\nu$ is the outward unit normal to $\partial U_j$ at $\theta$. We will define $\tilde{\Phi}_j$ one piece at a time. For $x\in B(r)$ outside the tubular neighborhood, define $\tilde{\Phi}_j(x)= \Phi_j(x)$. For $0<t<\epsilon$ and $x= \theta+t\nu$, we define
\be
 \tilde{\Phi}_j (x)  = \Phi_j (\theta + \alpha(t)),
 \ee
where $\alpha: (0,\epsilon)\rightarrow (0,\epsilon)$ is an increasing diffeomorphism with the following properties:
\begin{align} 
\lim_{t\to 0^+} \alpha(t)&=0\notag\\
\alpha'(t) &= \frac{2L}{\epsilon}\left[\frac{\partial f_j}{\partial \rho}(\theta+ \alpha(t) \nu) \right]^{-1} &\text{for $t$ near $0$}\label{alpha1}\\
\alpha(t) & = t &\text{for $t$ near $\epsilon$}\label{alpha2}\\
\alpha'(t) &> 2\left[\frac{\partial f_j}{\partial \rho}(\theta+ \alpha(t) \nu) \right]^{-1} &\text{everywhere.}\label{alpha3}
\end{align}
This is easily possible for small $\epsilon$ since $\frac{\partial f_j}{\partial \rho}\to\infty$ as $\rho\to0$. 

Note that on the outer side of the tubular neighborhood, $\tilde{\Phi}_j = \Phi_j\circ\phi_j$, where $\phi_j: \theta+t\nu\mapsto \theta+\alpha(t)\nu$ is a diffeomorphism of the outer side of the tubular neighborhood to itself. The point of~\eqref{alpha1} is to make sure that the diffeomorphism $\tilde{\Phi}_j$ extends from $B(r)\smallsetminus \overline{U}_j$ to the interior of $\Omega_j(r)$ to a diffeomorphism from $B(r)\smallsetminus U_j$ to all of $\Omega_j(r)$ up to the boundary, unlike $\Phi_j$. (The factor of $\frac{2L}{\epsilon}$ is convenient for later.) Equation~\eqref{alpha2} just says that $\phi_j$ smoothly matches the identity map outside the tubular neighborhood. Although $\phi_j$ does some distance contracting in the $\partial_\rho$ direction, inequality~\eqref{alpha3} guarantees that the contracting is counteracted by the stretching by $\Phi_j$, or in other words, we have:
 \be
  g_{\E} ( \partial_\rho, \partial_\rho) \le \tfrac{1}{2}g_j( d\tilde{\Phi}_j (\partial_\rho) ,  d\tilde{\Phi}_j  (\partial_\rho)).
  \ee

Now consider a vector $u$ tangent to a level set of $\rho$. Since we already know that $\Phi_j$ is distance nondecreasing, we only need to understand $\phi_j$, which maps one $\rho$ level set to another. While this map can certainly increase distance, we can keep it controlled by choosing $\epsilon$ small enough. That is, as $\epsilon\to0$, we know that the ratio between induced metrics on parallel surfaces to $\partial U_j$ must approach~$1$. Consequently, one can check that by choosing $\epsilon$ small enough, we have
 \be
 g_{\E} ( u, u) <  (1+\tfrac{1}{j}) g_j( d\tilde{\Phi}_j (u) ,  d\tilde{\Phi}_j  (u)),
 \ee
 for all tangent vectors $u$ in the outer side of the tubular neighborhood. It remains to verify \eqref{first} for a general vector that has a radial part $\partial_\rho$ and a tangential part $u$. While the cross term $g_j( d\tilde{\Phi}_j (\partial_\rho) ,  d\tilde{\Phi}_j  (u))$ could be potentially  large, it is dominated by the radial inner product with a small error of tangential inner product by the Cauchy-Schwarz inequality.

Next, for $t\in (-\tfrac{\epsilon}{2}, 0)$, we define $\tilde{\Phi}_j(\theta+t\nu) = (\theta, \frac{2L}{\epsilon}t) \in \partial U_j \times(-L, 0)$, so that this part of $\tilde{\Phi}_j$ is a diffeomorphism from an inner tubular neighborhood to the cylinder, which is obviously distance nondecreasing. We now see that the factor of $\frac{2L}{\epsilon}$ in~\eqref{alpha1} ensures that these two definitions of $\tilde{\Phi}_j$ match up appropriately along $\partial U_j$ so that $\tilde{\Phi}_j$ is $C^1$. Finally, for $t\in(-\epsilon, -\tfrac{\epsilon}{2})$, we  do something similar to what we did for $t\in(0,\epsilon)$, except now the diffeomorphism $\phi_j$ should be chosen to map the $\rho\in (-\epsilon, -\tfrac{\epsilon}{2})$ part of the tubular neighborhood to the entire $\rho\in(-\epsilon, 0)$ inner side of the tubular neighborhood. It all works out the same way since the graphing function $f_j$ defining the ``cap'' also satisfies $\frac{\partial f_j}{\partial \rho}\to\infty$ as we approach $\partial U_j$ from the inside.

Putting it all together, we have diffeomorphisms $\tilde{\Phi}_j: B(r) \rightarrow \tilde\Omega_j$ satisfying all hypotheses of Theorem~\ref{thm:AP} (noting that we still have a diameter bound for $\tilde\Omega_j$), and the result follows.
\end{proof}


\subsection{Discussion of Theorem~\ref{thm:HLSmain2}} \label{sec:thm1.4} 

  In this section we will explain how Theorem~\ref{thm:HLSmain} implies Theorem~\ref{thm:HLSmain2}, using the results of Section~\ref{section:point-convergence}. 
First, let us briefly summarize the original argument in~\cite{HLS}: Assume $M_j$ as in the hypotheses above, and choose a large $\bar{R}>0$. Theorem~\ref{thm:HLSmain} tells us that  $\Omega_j(r_0+\bar{R})\IFto B(r_0+\bar{R})$.
Starting with a sequence $x_j\in \Sigma_j(r_0)$, we want to extract a subsequential limit in the sense that \hbox{$(\Omega_j(r_0+\bar{R}), x_j)\IFZto (B(r_0+\bar{R}), x_\infty)$} for some $x_\infty$ and $Z$. Then we can invoke Lemma~\ref{lem-xinfty} to obtain the desired result. In~\cite[Lemma 5.1]{HLS} it was shown that $\Sigma_j(r_0)\GHto \partial B(r_0)$, and this implies
one can extract a subsequential limit in the sense that $(\Sigma_j(r_0), x_j)\GHZto (\partial B(r_0), x_\infty)$ for some $x_\infty$ and $Z$. Because of some imprecision of language in~\cite{HLS}, it was implicitly assumed that this is good enough. In this section we will fill in the details:

According to Theorem~\ref{theorem:GH}\eqref{item:H-convergence}, we can find $x_\infty$ and $Z$ such that both 
\[(\Sigma_j(r_0), x_j)\GHZto (\partial B(r_0), x_\infty)\quad\text{and}\quad (\Omega_j(r_0), x_j)\IFZto (B(r_0), x_\infty).\]
This is \emph{almost} what we want, but in order to make the argument completely rigorous, we will use Gromov-Hausdorff convergence of an entire neighborhood of $\Sigma_j(r_0)$ rather than just $\Sigma_j(r_0)$. So we will need the following lemma on 
 convergence of ``coordinate annular regions'' in the exterior part of $M_j$. 
\begin{lem}\label{lemma-annuli}
Assume $M_j$ is a sequence in $\Gr$ with ADM masses approaching zero. For $r>s >r_0 / 2 $, define  $\Omega_j(s,r):=  \overline{\Omega_j(r)  \smallsetminus  \Omega_j(s)} \subset M_j$ and $A(s, r):=\overline{B(r)\smallsetminus B(s)}\subset \E^n$. Then $\Omega_j(s,r)$ converges to $A(s, r)$ in both the Gromov-Hausdorff and intrinsic flat senses. 
\end{lem}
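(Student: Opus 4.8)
The plan is to mimic the proof of Theorem~\ref{thm:HLSmain} (and the corresponding argument for~\cite[Theorem 3.1 and Corollary 4.4]{HLS}), replacing the role of the ball $B(r)$ by the annulus $A(s,r)$. The key point is that on the region $\Omega_j(s,r)$, the graphing map $\Phi_j(x) = \Psi_j^{-1}(x, f_j(x))$ is an honest diffeomorphism onto its image with no need for boundary modification, since for $r > s > r_0/2$ the set $A(s,r)$ stays away from $U_j \subset B(r_0/2)$, so $f_j$ is smooth on a neighborhood of $A(s,r)$. Thus $\Phi_j: A(s,r) \to \Omega_j(s,r)$ is a $C^1$ diffeomorphism, and because a graphing map is distance nondecreasing, hypothesis~\eqref{first} of Theorem~\ref{thm:AP} holds with room to spare (indeed $g_{\E}(u,u) \le g_j(d\Phi_j(u), d\Phi_j(u))$). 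The diameter bound~\eqref{second} and the boundary area bound~\eqref{fourth} for $\Omega_j(s,r)$ follow from the uniform estimates in the proof of~\cite[Theorem 3.1]{HLS} (the boundary of $\Omega_j(s,r)$ consists of $\Sigma_j(s)$, $\Sigma_j(r)$, and possibly a piece of $\partial M_j$, all of which have uniformly bounded area, the last by the Penrose inequality as the ADM masses go to zero). The interior of $(A(s,r), g_{\E})$, however, is \emph{not} convex, so Theorem~\ref{thm:AP} does not apply on the nose to the annulus itself.

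To get around the non-convexity, I would introduce a ``filled-in'' sequence, exactly as in the proof of Theorem~\ref{thm:HLSmain}: let $\tilde\Omega_j$ be obtained by gluing to $\Omega_j(s,r)$ along $\Sigma_j(s)$ a cylinder $\Sigma_j(s) \times (-L, 0)$ of fixed length $L$ (a uniform diameter bound) and then a graphical cap over $B(s)$, and similarly observe that $\Omega_j(s,r)$ embeds metric-isometrically into $\tilde\Omega_j$ so that $d_{\mathcal{F}}(\Omega_j(s,r), \tilde\Omega_j) \to 0$ once the added volume is shown to go to zero — and here the added volume does go to zero because $\vol(\Omega_j(s)) \to \vol(B(s))$ and $\vol(B(s))$ is controlled; more carefully one should note $\tilde\Omega_j \setminus \Omega_j(s,r)$ can be arranged to converge to $B(s)$, so instead of filling in to get a \emph{ball} one fills to get a ball $B(r)$ and subtracts. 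Actually the cleaner route is: $\Omega_j(r) = \Omega_j(s) \cup \Omega_j(s,r)$ and by Theorem~\ref{thm:HLSmain} both $\Omega_j(r) \VFto B(r)$ and $\Omega_j(s) \VFto B(s)$; since $\vol(\Omega_j(s,r)) = \vol(\Omega_j(r)) - \vol(\Omega_j(s)) \to \vol(B(r)) - \vol(B(s)) = \vol(A(s,r))$, volume convergence is automatic, and for the intrinsic flat statement one applies Theorem~\ref{thm:AP} to $\tilde\Omega_j$ constructed so that its interior \emph{is} convex (cap off $B(s)$ by a convex graphical region and arrange the metric on the cap to be Euclidean away from a small collar near $\Sigma_j(s)$). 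Then $\tilde\Omega_j \VFto \tilde A$ where $\tilde A$ is $A(s,r)$ with a convex cap, and since $d_{\mathcal F}(\Omega_j(s,r), \tilde\Omega_j) \to 0$ and $d_{\mathcal F}(A(s,r), \tilde A)$ is a fixed positive number — no, this doesn't quite work either. The right statement is: restrict the limit. One proves $\tilde\Omega_j \VFto \tilde A$, notes $\Omega_j(s,r) = \tilde\Omega_j \rstr V_j$ for the appropriate subset $V_j$, and concludes via continuity of restriction to a region whose boundary is converging nicely. I would phrase it via the argument of Theorem~\ref{thm:HLSmain}: fill in, apply Theorem~\ref{thm:AP}, then peel off the filling using $d_{\mathcal F}(\Omega_j(s,r), \tilde\Omega_j)\to 0$ together with the identification of the limit of $\tilde\Omega_j$ as $A(s,r)$ glued to a region of vanishing influence.

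For the Gromov-Hausdorff convergence $\Omega_j(s,r) \GHto A(s,r)$, I would argue directly using the graphing map. Each point of $\Omega_j(s,r)$ is $\Phi_j(x)$ for a unique $x \in A(s,r)$, and $d_j(\Phi_j(x), \Phi_j(x')) \ge |x - x'|$ always; for the reverse estimate, one uses the asymptotic flatness hypothesis $|Df_j| \le \gamma$ on $|x| \ge r_0/2$ together with $|Df_j| \to 0$ as $|x|\to\infty$ to bound the length distortion of $\Phi_j$ along paths in $A(s,r)$ — the extrinsic (chord) length in the graph is at most $\sqrt{1+\gamma^2}$ times the Euclidean length, but more importantly, because $A(s,r)$ is a fixed compact set and the gradient decays, for $j$ large the ratio of the graph metric to the Euclidean metric on $A(s,r)$ is uniformly close to $1$ (this is where the mass $\to 0$, hence $f_j \to$ const in $C^1_{loc}$ away from $B(r_0/2)$ — wait, we only know $|Df_j|\le\gamma$ near $r_0/2$, not decay; but for $s > r_0/2$ fixed, combined with the asymptotically Schwarzschild/constant condition and the Penrose/ADM estimates of~\cite{HLS}, one does get $C^0$-closeness of $g_j$ to $g_{\E}$ on $A(s,r)$). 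So $\Phi_j$ is a $(1+\epsilon_j)$-bi-Lipschitz-ish map in the relevant sense and the GH convergence follows, with $\epsilon_j \to 0$.

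\medskip

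\noindent\textbf{Main obstacle.} The delicate step is the intrinsic flat convergence: since the annulus is not convex, Theorem~\ref{thm:AP} cannot be applied to $\Omega_j(s,r)$ directly, and one must carefully construct a convex filled-in sequence $\tilde\Omega_j$, apply Theorem~\ref{thm:AP} there, and then descend the conclusion to $\Omega_j(s,r)$ by controlling the flat distance $d_{\mathcal F}(\Omega_j(s,r), \tilde\Omega_j)$ and correctly identifying the limit. Equivalently, one must ensure that the "filling" occupies a region whose contribution in the limit is exactly the convex cap over $B(s)$, and then invoke a restriction argument (in the spirit of Lemma~\ref{lem-AASorh} / the definition of $M \rstr V$). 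Verifying that the filled-in metric genuinely has convex interior while matching $\Omega_j(s,r)$ along $\Sigma_j(s)$ in a $C^1$ fashion — and that the geometry of $\Sigma_j(s)$ (which is only asymptotically round, not exactly round) does not obstruct this — is the one place where real care is needed; everything else is a routine adaptation of the estimates already established in~\cite{HLS} and in the proof of Theorem~\ref{thm:HLSmain}.
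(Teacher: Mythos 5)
Your proposal runs into a real obstacle exactly where you flag it, and the workaround you gesture at does not close the gap. The paper's actual route is structurally different and avoids the convexity problem entirely: it first uses the fact that $\pi\circ\Psi_j:\Omega_j(s,r)\to A(s,r)\times\{0\}$ is uniformly bilipschitz (uniformly in $s,r,j$, coming from the Lipschitz bound $|Df_j|\le\gamma$ on $|x|\ge r_0/2$ and the fact that graphing maps are distance nondecreasing), then invokes the bilipschitz compactness theorem, Theorem A.1 of~\cite{HLS} (with the boundary term handled as in~\cite[Remark~2.22]{Allen-Burtscher:2020}), to obtain a \emph{subsequence} converging in both the Gromov--Hausdorff and intrinsic flat senses to some limit space. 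Only afterward does one identify the limit as $A(s,r)$, and since every subsequence has a further subsequence converging to the same $A(s,r)$, the full sequence converges. Your plan instead tries to apply Theorem~\ref{thm:AP} directly, which requires convexity of the limit; you then attempt to fill in $\Sigma_j(s)$ with a ball-like cap and descend via flat distance, but as you yourself observe, $d_{\mathcal F}(\Omega_j(s,r),\tilde\Omega_j)$ does \emph{not} tend to zero because the filling has volume $\approx\vol(B(s))$, a fixed positive quantity. The ``restriction'' argument you sketch as a replacement (writing $\Omega_j(s,r)=\tilde\Omega_j\rstr V_j$ and invoking ``continuity of restriction'') is not an available tool here: the only restriction results in play (Lemma~\ref{lem-AASorh}) apply to balls around a converging point, give convergence only for almost every radius, and only pass to a subsequence, so they cannot be invoked to split off the annulus from the filled ball in the way you need.

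Separately, your proposed Gromov--Hausdorff argument claims more than is true. You assert that on the fixed annulus $A(s,r)$ the induced graph metric $g_j$ becomes $C^0$-close to $g_{\E}$, i.e.\ that $\Phi_j$ is $(1+\epsilon_j)$-bilipschitz with $\epsilon_j\to 0$. The hypotheses do \emph{not} yield this: $|Df_j|\le\gamma$ gives only a fixed bilipschitz constant, and ADM mass $\to 0$ controls an integrated (volume) quantity, not a pointwise one --- the hypersurface can develop thin ripples or folds on $A(s,r)$ with $|Df_j|$ near $\gamma$ whose Euclidean footprint shrinks, so volume converges but $g_j\not\to g_{\E}$ in $C^0$. (This is precisely why the stability conclusion is intrinsic flat rather than anything stronger.) The correct GH statement again comes from the uniform bilipschitz bound plus compactness, followed by identification of the limit, rather than from metric-closeness of the sequence to the limit.
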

\begin{proof}
We first claim that a subsequence of $\Omega_j(s,r)$ converges to some integral current space $\Omega_\infty(s,r)$ in both the Gromov-Hausdorff and intrinsic flat  senses. Technically, this claim is all that is needed in order to prove Theorem \ref{thm:HLSmain2}. We provide a stronger conclusion in the statement of Lemma~\ref{lemma-annuli} simply because we can.

The claim is proved using the same argument used to prove (the first part of) Lemma~5.1 of \cite{HLS}:  The definition of $\Gr$ implies that the diffeomorphism $\pi \circ  \Psi_j :   \Omega_j(s,r)   \rightarrow  A(s,r)  \times \{0\}$ has a bilipschitz constant which is uniform in $s$, $r$, and $j$, where $\pi$ is the projection map
 from $\mathbb{E}^{n+1}$ onto $\mathbb{E}^n \times \{ 0 \}$.  Then we can apply Theorem A.1 of~\cite{HLS} to obtain the claim. The only complication is that Theorem A.1 of~\cite{HLS} is stated only for integral current spaces \emph{without} boundary, but one can see from its proof that the conclusion will hold as long as the boundary mass is uniformly bounded, as explained in~\cite[Remark 2.22]{Allen-Burtscher:2020}.  Specifically, this can be seen by considering the effect of an extra boundary term on page 294 of~\cite{HLS}, which turns out to be negligible.
  Note that for our desired application, the boundary mass is just $\vol(\partial  \Omega_j(s,r) )$, which we know is uniformly bounded because of the uniform Lipschitz bound on $f_j$.

To obtain the final conclusion, we can use (a much easier version of) the same argument that was used to prove Theorem~1.3 of~\cite{HLS} to see that $\Omega_j(s,r) \IFto A(s, r)$, and hence $\Omega_\infty(s,r)$ must be isometric to $A(s,r)$. Since every subsequence of $\Omega_j(s,r)$ has a subsequence converging in both the Gromov-Hausdorff and intrinsic flat senses to $A(s, r)$, which is independent of choice of subsequence, the original sequence must converge to $A(s,r)$.
 \end{proof}

\begin{proof}[Proof of Theorem \ref{thm:HLSmain2}]
Assume $M_j$ is a sequence in $\Gr$ with ADM masses approaching zero, and let $p_j\in \Sigma_j(r_0)$. 
Choose some large $\bar{R}>1$. Theorem~\ref{thm:HLSmain}  tells us that $\Omega_j(r_0+ \bar{R})\IFZto B(r_0+\bar{R})$ for some choice of $Z$ (and maps), and this is the main ingredient of our proof. 
Our first task is to prove the following:

\underline{Claim:}  There is a subsequence of $p_j$ (still indexed by $j$) such that for almost every $R\in (0, \bar{R}-1)$, we have $B(p_{j}, R) \IFto B(R)$. 
 
Without loss of generality, assume $r_0>2$. Applying Lemma~\ref{lemma-annuli}, we know that $\Omega_j(r_0-1, r_0+1)$ converges in both the Gromov-Hausdorff  and intrinsic flat senses to $A(r_0-1, r_0+1)$. By Proposition~\ref{prop:GHIF}, there exist a separable complete metric space $W$ and embeddings $\psi_j$ such that 
\[ \Omega_j(r_0-1, r_0+1) \GHWto  A(r_0-1, r_0+1)\enspace\text{and}\enspace \Omega_j(r_0-1, r_0+1) \IFWto A(r_0-1, r_0+1).\]
 Since  $\psi_\infty(A(r_0-1, r_0+1))$ is compact, the Hausdorff convergence implies that there is subsequence of $\psi_j(p_j)$, still indexed by $j$, that converges to $\psi_\infty(p_\infty)$ for some $p_\infty\in A(r_0-1, r_0+1)$. Therefore
\[ \left(\Omega_j(r_0-1, r_0+1), p_j\right) \IFWto \left(A(r_0-1, r_0+1), p_\infty\right).\] 

We apply Theorem~\ref{theorem:point-convergence} with $X_j=\Omega_j (r_0+ \bar{R})$ and $V_j =\Omega_j(r_0-1, r_0+1)$ to obtain a subsequence, still indexed by $j$, and a point $p'_\infty$ such  that
\be\label{eqn:converging-points}
 \left( \Omega_j (r_0+ \bar{R}), p_j\right)   \IFZto \left( B(r_0+\bar{R}), p'_\infty\right).
\ee
By Theorem~\ref{theorem:GH}  \eqref{item:bound}, we also have
\[ d_{\E}(p'_\infty, \partial B(r_0 + \bar{R}) ) \ge \limsup_{j\to\infty}  d_j\left(p_j, \Sigma_j (r_0 + \bar{R}) \right)\ge  \bar{R}-1.\]
(Note that we apply Theorem~\ref{theorem:GH} \eqref{item:bound} with $\Sigma_j(r_0+\bar{R})$ as our ``$\partial_1 M_j$'' and $\partial M_j$ as our ``$\partial_2 M_j$,''  the latter of which we know vanishes in the intrinsic flat limit.)

For $R<\bar{R}-1$, we have $B(p_j, R)\subset \Omega_j (r_0+ \bar{R})$ and $B(p'_\infty, R)\subset B(r_0+\bar{R})$, so we can apply Lemma~\ref{lem-AASorh} to~\eqref{eqn:converging-points} to obtain the Claim. 

The proof of Lemma~\ref{lem-AASorh} in~\cite[Lemma~4.1]{Sormani-AA} also shows, by looking at complements of balls rather than the balls themselves, that a further subsequence (still indexed by $j$) satisfies
 $\Omega_{j}(r_0+\bar{R})\smallsetminus B(p_{j}, R)\IFto B(r_0+\bar{R}) \smallsetminus B(p'_\infty, R)$. Using this, the volume convergence argument in~\cite[Theorem 1.4]{HLS} tells us that $\vol(B(p_{j}, R)) \to \vol(B(R))$, and hence  $B(p_{j}, R) \VFto B(R)$ for almost every $R\in (0,\bar{R}-1)$.
 
Finally, since $\bar{R}>1$ was arbitrary, a diagonalization argument shows that there exists a subsequence such that for almost all $R>0$, $B(p_{j}, R) \VFto B(R)$. Then we invoke
Theorem~\ref{theorem:subsequences} to see that for \emph{all} $R>0$, the \emph{original} sequence satisfies $B(p_{j}, R)\VFto B(R)$. 
 \end{proof}

\subsection{Discussion of Lemma 5.1 of~\cite{HLS}} \label{sec:lemma5.1} 

In this section we explain how Theorem~\ref{theorem:GH}\eqref{item:H-convergence} is used in the proof of~\cite[Lemma 5.1]{HLS}. It is only relevant to the second part of~\cite[Lemma 5.1]{HLS}, which says the following:
\begin{lem}
Assume $M_j$ is a sequence in $\Gr$ with ADM masses approaching zero. Then the map $\Psi_\infty:  \set(\Omega_\infty(r))  \rightarrow {B}(r) \times \{0\}$  restricted to $\Sigma_\infty(r):= \set(\partial\Omega_\infty(r))$ is a bilipschitz map onto $\partial{B}(r) \times \{0\}$.
\end{lem}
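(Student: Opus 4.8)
The plan is to argue that $\Psi_\infty$ restricted to $\Sigma_\infty(r)$ is bilipschitz onto $\partial B(r)\times\{0\}$ by realizing $\Sigma_\infty(r)$ as the Gromov-Hausdorff limit of the boundary spheres $\Sigma_j(r)$ and transferring the uniform bilipschitz bounds across the limit. First I would recall from the proof of the first part of \cite[Lemma 5.1]{HLS} (and from Lemma~\ref{lemma-annuli} above, applied on a thin annulus $\Omega_j(r-\eta,r+\eta)$) that the projection maps $\pi\circ\Psi_j\colon\Omega_j(r-\eta,r+\eta)\to A(r-\eta,r+\eta)\times\{0\}$ are bilipschitz with a constant $C$ that is uniform in $j$. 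Restricting to the outer boundary, the maps $\pi\circ\Psi_j\colon\Sigma_j(r)\to\partial B(r)\times\{0\}$ are then uniformly bilipschitz; in particular the $\Sigma_j(r)$ are uniformly bounded with uniformly bounded diameters, so by Gromov's precompactness theorem a subsequence converges in the Gromov-Hausdorff sense, and one identifies the limit with $\partial B(r)\times\{0\}$ (equipped with its ambient Euclidean metric) by passing the uniform bilipschitz bounds to the limit. Thus $(\Sigma_j(r),d_j)\GHto(\partial B(r)\times\{0\}, d_{\E})$.

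Next I would invoke Theorem~\ref{theorem:GH}\eqref{item:H-convergence}: since we already know $\Omega_j(r)\IFZto B(r)$ from Theorem~\ref{thm:HLSmain} (with $\partial_1 M_j=\Sigma_j(r)$ and $\partial_2 M_j=\partial M_j\to\mathbf 0$ in the intrinsic flat sense), and since we have just established $\Sigma_j(r)\GHto\partial B(r)\times\{0\}$, we may arrange a single separable complete metric space $Z$ and embeddings $\varphi_j$ realizing both the intrinsic flat convergence $\Omega_j(r)\IFZto B(r)$ and the Hausdorff convergence $\varphi_j(\Sigma_j(r))\to\varphi_\infty(\partial B(r)\times\{0\})$. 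In this common $Z$, the image $\varphi_\infty(\set(\partial\Omega_\infty(r)))$ agrees (as a subset of $Z$) with the limiting boundary sphere, because $\set(\partial\Omega_\infty(r))=\Sigma_\infty(r)$ is by definition the support of the limiting boundary current, and the current limit of the $\partial_1 M_j$ is supported exactly on the Hausdorff limit of the $\varphi_j(\Sigma_j(r))$.

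The remaining and most delicate step is to promote this set-level identification into a genuine bilipschitz homeomorphism $\Psi_\infty|_{\Sigma_\infty(r)}\colon\Sigma_\infty(r)\to\partial B(r)\times\{0\}$ with the induced intrinsic metric on $\Sigma_\infty(r)$. For this I would take any two points $p,q\in\Sigma_\infty(r)$, use Lemma~\ref{lem-xinfty} to choose $p_j,q_j\in\Sigma_j(r)$ with $\varphi_j(p_j)\to\varphi_\infty(p)$ and $\varphi_j(q_j)\to\varphi_\infty(q)$, and then exploit that $d_j(p_j,q_j)$ converges to $d_\infty(p,q)$ (using that the convergence in $Z$ is metric-isometric, together with the GH convergence of $\Sigma_j(r)$ which pins down the intrinsic distances in the limit), while $|\pi\Psi_j(p_j)-\pi\Psi_j(q_j)|$ converges to the Euclidean distance between the images of $p$ and $q$. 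The uniform bilipschitz inequality
\[
C^{-1}\,d_j(p_j,q_j)\ \le\ |\pi\Psi_j(p_j)-\pi\Psi_j(q_j)|\ \le\ C\,d_j(p_j,q_j)
\]
then passes to the limit, giving $C^{-1}d_\infty(p,q)\le |\Psi_\infty(p)-\Psi_\infty(q)|\le C\,d_\infty(p,q)$; surjectivity onto $\partial B(r)\times\{0\}$ follows from the Hausdorff convergence. The main obstacle I anticipate is the bookkeeping needed to verify that the intrinsic metric $d_\infty$ on $\Sigma_\infty(r)$ really is the one induced by the limiting current space structure and coincides with the GH limit of the $(\Sigma_j(r),d_j)$ — i.e. that the restricted embeddings $\varphi_j|_{\Sigma_j(r)}$ remain metric-isometric in the limit and that no ``collapsing'' of the boundary metric occurs — and this is exactly the point where the uniform bilipschitz bound (preventing degeneration) and Theorem~\ref{theorem:GH}\eqref{item:H-convergence} (giving the common ambient space) are essential.
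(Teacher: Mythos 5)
Your core idea is sound and does hit the crucial new ingredient, namely using Theorem~\ref{theorem:GH}\eqref{item:H-convergence} to realize the Gromov--Hausdorff convergence of the $\Sigma_j(r)$ in the \emph{same} ambient space $Z$ used to define $\Psi_\infty$; this is exactly what the paper also does. You then pass the uniform bilipschitz estimate on $\pi\circ\Psi_j|_{\Sigma_j(r)}$ to the limit for pairs of points and argue surjectivity separately from Hausdorff convergence, whereas the paper instead takes the maps $\Phi_j:=(\pi\circ\Psi_j)^{-1}$, extracts an Arzel\`a--Ascoli-type subsequential limit $\Phi_\infty$, and verifies that $\Phi_\infty$ is a genuine Lipschitz two-sided inverse of $\Psi_\infty|_{\Sigma_\infty(r)}$ by passing the identities $\Phi_j\circ\pi\circ\Psi_j=\mathrm{id}$ and $\pi\circ\Psi_j\circ\Phi_j=\mathrm{id}$ to the limit. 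The two routes are genuinely different in flavor: yours is a direct metric-comparison argument while the paper's constructs an explicit inverse map, which packages injectivity, the lower Lipschitz bound, and surjectivity into one object.

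There is one genuine gap in your first step. You claim $(\Sigma_j(r),d_j)\GHto(\partial B(r)\times\{0\},d_{\E})$ and justify it by ``passing the uniform bilipschitz bounds to the limit.'' This does not follow: a sequence of spaces uniformly bilipschitz to a fixed compact model need not converge in the Gromov--Hausdorff sense to that model; the GH limit is only bilipschitz to the model (consider spheres of radius $\lambda_j\to\lambda\ne 1$). What the first part of~\cite[Lemma~5.1]{HLS} actually gives you, and what Theorem~\ref{theorem:GH}\eqref{item:H-convergence} requires, is $\Sigma_j(r)\GHto\Sigma_\infty(r)$ where $\Sigma_\infty(r):=\set(\partial\Omega_\infty(r))$ is the abstract current-space limit --- not that the limit is isometric to the round sphere. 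You should simply cite that statement rather than reprove it; the identification of $\Sigma_\infty(r)$ with $\partial B(r)\times\{0\}$ is a \emph{consequence} of the lemma (together with Theorem~\ref{thm:HLSmain}), not an input. Once you replace that step, the rest of your argument --- apply Theorem~\ref{theorem:GH}\eqref{item:H-convergence} to get $\Sigma_j(r)\GHZto\Sigma_\infty(r)$ in the same $Z$, pick approximating sequences via Lemma~\ref{lem-xinfty}, and pass $C^{-1}d_j(p_j,q_j)\le |\pi\Psi_j(p_j)-\pi\Psi_j(q_j)|\le C\,d_j(p_j,q_j)$ to the limit using the definition of $\Psi_\infty$ and the metric-isometric embeddings --- goes through. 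Your concern about whether $d_\infty$ on $\Sigma_\infty(r)$ is ``the right metric'' is a non-issue: by definition it is the restriction of the metric on the integral current space $\Omega_\infty(r)$, the restricted embeddings $\varphi_j|_{\Sigma_j(r)}$ are automatically metric-isometric, and no further collapsing argument is needed once the common $Z$ is fixed.
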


We briefly recall the construction of $\Omega_\infty(r)$ and $\Psi_\infty$ in~\cite[Theorem 3.1]{HLS}: There exist a subsequence, still indexed by $j$, an integral current space $\Omega_\infty(r)$, and a choice of $Z$, $\varphi_j$  such that $\Omega_j(r)\IFZto\Omega_\infty(r)$. Then
$\Psi_\infty$ was defined so that, after taking an appropriate subsequence, for any $x\in  \set(\Omega_\infty(r))$ and any sequence $x_j\in \Omega_j(r)$, 
\[ \text{if }\quad
(\Omega_j(r), x_j)  \IFZto (\Omega_\infty(r), x),\quad\text{then }\quad  \Psi_\infty(x) = \lim_{j\to\infty} \Psi_j(x_j).\]
In particular, the definition of $\Psi_\infty$ depends on the choice of $Z$ (and choice of subsequence).  We know $\Lip(\Psi_\infty)\le 1$ since each $\Lip(\Psi_j)\le1$, and we know the image of $\Psi_\infty$ lies in ${B}(r) \times \{0\}$ by~\cite[Lemma 4.5]{HLS}. In other words, $\pi\circ\Psi_\infty=\Psi_\infty$, where $\pi$ is the projection map to $\mathbb{E}^n\times\{0\}$.
Finally, since $\Psi_j(\Sigma_j(r)) \subset \partial B(r)\times \R$, we also have $\Psi_\infty( \Sigma_\infty(r))\subset \partial B(r)\times \{0\}$.

\begin{proof}
We will prove that $\Psi_\infty|_{\Sigma_\infty(r)}:\Sigma_\infty(r)\rightarrow \partial{B}(r) \times \{0\}$ is bilipschitz
 by constructing a Lipschitz inverse. We define $\Phi_j : \partial B(r)\times\{0\} \rightarrow \Sigma_j(r)$ to be the inverse of $\pi\circ \Psi_j$, where $\pi$ is the projection map. The first part of~\cite[Lemma 5.1]{HLS} says that $\Sigma_j(r)\GHto\Sigma_\infty(r)$. Since $\partial \Omega_j(r)=\Sigma_j(r) \cup \partial M_j$, and $\partial M_j$ vanishes in the intrinsic flat limit, we can apply Theorem~\ref{theorem:GH}\eqref{item:H-convergence} to see that $\Sigma_j(r)\GHZto\Sigma_\infty(r)$, where $Z$ and $\varphi_j$ are the same metric space and maps that were used to construct~$\Psi_\infty$. (In the original proof of~\cite[Lemma 5.1]{HLS}, it was implicitly assumed that one could use the same  $Z$ and $\varphi_j$ as in the construction of $\Psi_\infty$.)
  
 Since there is a uniform Lipschitz bound for $\Phi_j$, we can extract a subsequence, still indexed by $j$, such that $\Phi_j$ converges to a Lipschitz map
 \[ \Phi_\infty:  \partial B(r)\times\{0\} \rightarrow \Sigma_\infty(r),\]
where $\Phi_\infty$ is defined so that for all $y\in  \partial B(r)\times\{0\}$, 
$(\Sigma_j(r), \Phi_j(y)) \GHZto ( \Sigma_\infty(r), \Phi_\infty(y))$,
or in other words, $(\varphi_\infty\circ\Phi_\infty)(y)=\displaystyle\lim_{j\to\infty} (\varphi_j\circ\Phi_j)(y)$. The proof is completed by showing that $\Phi_\infty$ is the inverse map of $\Psi_\infty|_{\Sigma_\infty(r)}$.

The rest of the argument proceeds as in~\cite[Lemma 5.1]{HLS}. We know that 
\begin{align*}
 \Phi_j\circ \pi \circ \Psi_j &= id: \Sigma_j(r)\rightarrow \Sigma_j(r)\\
\pi \circ \Psi_j \circ \Phi_j &= id: \partial B(r)\times\{0\}\rightarrow \partial B(r)\times\{0\},
\end{align*}
and then the desired result follows from taking limits. We explain this in detail below:

For any $x\in\Sigma_\infty(r)$, Lemma~\ref{lem-xinfty} implies there exists $x_j\in \Sigma_j(r)$ such that the following holds in $Z$:
\begin{align*}
\varphi_\infty(x)&=
\lim_{j\to\infty} \varphi_j(x_j)
=\lim_{j\to\infty} \varphi_j ((\Phi_j\circ \pi \circ \Psi_j) (x_j) )\\
&= 
 (\varphi_\infty\circ\Phi_\infty )\left( \lim_{j\to\infty} (\pi\circ \Psi_j) (x_j) \right)=
 \varphi_\infty (( \Phi_\infty\circ\pi\circ\Psi_\infty)(x)),
\end{align*} 
where we used our definitions of $\Phi_\infty$ and $\Psi_\infty$. So $\Phi_\infty\circ\Psi_\infty=id$ on $\Sigma_\infty(r)$.

Meanwhile, for any $y\in  \partial B(r)\times\{0\}$, by definition of $\Phi_\infty$, 
$\varphi_\infty(\Phi_\infty(y))=\displaystyle \lim_{j\to\infty} \varphi_j(\Phi_j(y))$ in $Z$, and then by definition of $\Psi_\infty$,
$\Psi_\infty(\Phi_\infty(y))= \displaystyle \lim_{j\to\infty} \Psi_j(\Phi_j(y))$ in $\partial B(r)\times \R$. Hence
$( \pi \circ \Psi_\infty\circ\Phi_\infty)(y)
= \lim_{j\to\infty} (\pi \circ \Psi_j \circ \Phi_j )(y)=y$. 
So $\Psi_\infty\circ\Phi_\infty=id$ on $\partial B(r)\times\{0\}$.
\end{proof}

\bibliographystyle{alpha}
\bibliography{2014}
\end{document}